\documentclass[11pt,reqno]{amsart}

\usepackage{amsfonts, amsthm, amsmath}
\allowdisplaybreaks[4]

\usepackage{rotating}

\usepackage{tikz}

\usepackage{graphics}

\usepackage{amssymb}

\usepackage{circledsteps}

\usepackage{amsmath}

\usepackage{amscd}

\usepackage[latin2]{inputenc}

\usepackage{t1enc}

\usepackage[mathscr]{eucal}

\usepackage{indentfirst}

\usepackage{graphicx}

\usepackage{graphics}

\usepackage{pict2e}

\usepackage{mathrsfs}

\usepackage{enumerate}

\usepackage[pagebackref]{hyperref}
\hypersetup{colorlinks=true}
\usepackage{cite}
\usepackage{color}
\usepackage{epic}
\usepackage{hyperref} 
\usepackage{framed}
\usepackage{mathabx}

\numberwithin{equation}{section}
\theoremstyle{plain}

\newtheorem{theorem}{Theorem}[section]

\newtheorem{lemma}[theorem]{Lemma}

\newtheorem{proposition}[theorem]{Proposition}

\theoremstyle{definition}

\newtheorem{conj}[theorem]{Conjecture}

\newtheorem{?}[theorem]{Problem}

\newtheoremstyle{named}{}{}{\itshape}{}{\bfseries}{.}{.5em}{#1\thmnote{ #3}}
\theoremstyle{named}

\newcommand{\f}[1]{\ifthenelse{\equal{#1}{1}}{(q;q)_\infty}{(q^{#1};q^{#1})_{\infty}}}

\def\bZ{\mathbb{Z}}

\def\tT{\widetilde{T}}
\def\tZ{\widetilde{Z}}
\def\hZ{\hat{Z}}

\def\ri{\rightarrow}

\begin{document}
\title[Proofs of five conjectural identities on modular rank four Nahm sums]{Proofs of five conjectural identities on modular rank four Nahm sums}

\author[H. Li]{Haijun Li}
\address[Haijun Li]{College of Mathematics and Statistics, Chongqing University, Chongqing 401331, P.R. China}
\email{lihaijun@cqu.edu.cn; lihaijune@163.com}

\date{\today}

\begin{abstract}
Nahm sums and Rogers-Ramanujan type identities have attracted considerable attention in recent years. In this paper, we provide analytic proofs of five conjectural identities on modular rank four Nahm sums that were proposed by Cao and Wang. Moreover, we reveal that the conjectures of Shi-Wang and Cao-Wang are closely related.
\end{abstract}

\keywords{Rogers-Ramanujan type identity, $q$-series, Nahm sum, modular triple.
\newline \indent 2020 {\it Mathematics Subject Classification}. 05A30, 11P84, 33D15, 11F03.}

\maketitle
\section{Introduction}\label{sec:intro}

Rogers-Ramanujan type identities are identities of the sum-to-product type, where the sum side is a $q$-hypergeometric series (possibly involving multiple summations), and the product side is an infinite product. The study of such identities was inspired by the two famous Rogers-Ramanujan identities:
\begin{align}
&\sum_{n\geq 0}\frac{q^{n^2}}{(q; q)_n}=\frac{1}{(q, q^4; q^5)_{\infty}},\quad \sum_{n\geq 0}\frac{q^{n^2+n}}{(q; q)_n}=\frac{1}{(q^2, q^3; q^5)_{\infty}},\label{id:RR}
\end{align}
where we adopt the following standard $q$-series notation (see, e.g.,~\cite{GR90}):
\begin{align*}
&(a; q)_n := \prod_{k=0}^{n-1} (1 - aq^k),\quad  (a; q)_\infty := \prod_{k=0}^{\infty} (1 - aq^k), \quad |q| < 1,\\
&(a_1, \ldots, a_m; q)_n := (a_1; q)_n \cdots (a_m; q)_n, \qquad n \in \mathbb{N} \cup \{\infty\}.
\end{align*}
For brevity we also adopt the notation
\begin{align*}
J_m:=(q^m; q^m)_{\infty}.
\end{align*}
The identities in \eqref{id:RR} were first proved by Rogers \cite{rog94}, later rediscovered by Ramanujan \cite{ram14, ram19}, and independently derived by Schur \cite{sch17}. 

Rogers-Ramanujan type identities play an important role in combinatorics, number theory, Lie algebras, statistical mechanics, and other fields. For a detailed introduction to these identities, we refer the reader to Sills' book~\cite{sil18}.

One of the central problems connecting Rogers-Ramanujan type identities to the theory of modular forms is to find all modular $q$-hypergeometric series. In this regard, Nahm~\cite{nah941, nah942, nah07} considered a particularly important class of series which were usually referred to as {\it Nahm sums}. For a $r\times r$ rational positive definite matrix $A$, a rational vector $B$ of length $r$ and a rational scalar $C$, the rank $r$ Nahm sum corresponding to $(A, B, C)$ is defined as 
\begin{align}
f_{A, B, C}(q):=\sum_{n=(n_1, ..., n_r)^T\in (\bZ_{\geq 0})^r}\frac{q^{\frac{1}{2}n^TAn+n^TB+C}}{(q; q)_{n_1}\cdots (q; q)_{n_r}}.
\end{align}
Nahm's problem is to find all modular Nahm sums $f_{A, B, C}(q)$, and such $(A, B, C)$ is called a {\it modular triple}.

In 2007, Zagier~\cite{zag07} made a significant contribution to Nahm's problem by establishing the conjecture in rank-one case. For higher ranks, namely $r=2$ and $r=3$, he identified numerous candidate modular triples, which were subsequently verified by various authors, including Cao-Rosengren-Wang~\cite{CRW24}, Cherednik-Feigin~\cite{CF13}, Vlasenko-Zwegers~\cite{VZ11}, Wang~\cite{wan241, wan242}, and Zagier~\cite{zag07} himself.

Research related to Nahm's problem is still ongoing. Recently, Cao and Wang \cite{CW25} applied the lift-dual operation to the rank three modular triples in works of Zagier~\cite{zag07} and Cao-Wang~\cite{CW24} to obtain rank four modular triples. In their work, a large number of Rogers-Ramanujan type identities were proved. Alongside the main results obtained in their paper, Cao and Wang also proposed six identities of a conjectural nature. It is precisely the goal of this paper to settle five of them by means of rigorous analytic proofs, thus complementing and extending their work.

Next, we state our main results as follows. The corresponding conjecture numbers in the original paper~\cite{CW25} can be found in parentheses after each theorem.

\begin{theorem}[{cf.~\cite[Conjecture~3.2]{CW25}}]\label{thm:1}
We have
\begin{align}
\sum_{i,j,k,l \geq 0}
\frac{
  q^{\frac{1}{2}(i^2 - i) + j^2 + k^2 + l^2 - il - jk - jl - 2j + 2k + l}
}{
  (q; q)_i (q; q)_j (q; q)_k (q; q)_l
}
&= 8q^{-1} \frac{J_2^6}{J_1^6}, \label{thm11}\\
\sum_{i,j,k,l \geq 0}
\frac{
  q^{i^2 + 2j^2 + 2k^2 + 2l^2 - 2il - 2jk - 2jl - 3j + 3k + l}
}{
  (q^2; q^2)_i (q^2; q^2)_j (q^2; q^2)_k (q^2; q^2)_l
}
&= 2q^{-1} \frac{J_2^3}{J_1^3},\label{thm12} \\
\sum_{i,j,k,l \geq 0}
\frac{
  q^{\frac{1}{2}(i^2 + i) + j^2 + k^2 + l^2 - il - jk - jl - j + k}
}{
  (q; q)_i (q; q)_j (q; q)_k (q; q)_l
}
&= 4 \frac{J_2^6}{J_1^6}.\label{thm13}
\end{align}
\end{theorem}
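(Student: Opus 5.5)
Our plan is to reduce each quadruple sum to lower-rank sums by summing over the variables one at a time with the classical $q$-binomial tools: Euler's identities
\[
\sum_{n\ge0}\frac{z^n}{(q;q)_n}=\frac{1}{(z;q)_\infty},\qquad \sum_{n\ge0}\frac{q^{\binom n2}z^n}{(q;q)_n}=(-z;q)_\infty,
\]
together with the $q$-Chu--Vandermonde and $q$-Gauss summations. In \eqref{thm11} and \eqref{thm13} the variable $i$ appears only through $\frac12(i^2\mp i)-il$, so the $i$-sum is an Euler sum:
\[
\sum_{i}\frac{q^{\binom i2}q^{-il}}{(q;q)_i}=(-q^{-l};q)_\infty,\qquad
\sum_i\frac{q^{\binom{i+1}{2}}q^{-il}}{(q;q)_i}=(-q^{1-l};q)_\infty .
\]
We then write $(-q^{-l};q)_\infty=(-q^{-l};q)_l(-1;q)_\infty$ and use $(-q^{-l};q)_l=q^{-\binom{l+1}2}(-q;q)_l$. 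This cancels the quadratic term $l^2$ partially and leaves a factor $(-q;q)_l/(q;q)_l=1/(q;q^2)_l\cdot\frac{(q^2;q^2)_l}{(q;q)_l^2}\cdots$. The factor $(-1;q)_\infty=2(-q;q)_\infty$ explains the constants $8=2\cdot4$ and $4$. In \eqref{thm12} the same happens in base $q^2$: there $i^2-2il$ pairs with $(q^2;q^2)_i$ via the Euler sum $\sum q^{i^2}z^i/(q^2;q^2)_i=(-zq;q^2)_\infty$.

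This leaves a triple sum over $j,k,l$ with exponent $j^2+k^2+\tfrac12 l^2-jk-jl+\dots$, with $(-q;q)_l$ in the numerator. Next we sum over $k$. The $k$-dependence is $q^{k^2-jk+2k}/(q;q)_k$, which is not directly Euler. So instead we shift $k=j+m$ or use the Cauchy-type identity
\[
\sum_{k}\frac{q^{k^2-jk+ak}}{(q;q)_k}\ \text{combined with}\ \frac1{(q;q)_j}
\]
recognized as a $q$-binomial constant-term. Concretely, we will write $q^{-jk}$ and $q^{-jl}$ via the constant-term method: introduce $\sum_j q^{j^2-2j}z^{j}/(q;q)_j$ and express the product $\sum_j\sum_k\sum_l$ as the constant term in $z$ of a product of three Euler/Jacobi-triple-product-type series. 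Each single sum then becomes an infinite product in $z$. The constant term of the resulting product is evaluated by the Jacobi triple product and the quintuple/$q$-Gauss identity. The goal is the eta-quotients $J_2^6/J_1^6=(-q;q)_\infty^6$ and $J_2^3/J_1^3$.

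A cleaner alternative we would try first is to sum $j$ last. After the $i$- and $k$-sums, the remaining pair $(j,l)$ should collapse by $q$-Chu--Vandermonde,
\[
\sum_{j}\frac{q^{j^2-jl-\cdots}}{(q;q)_j(q;q)_{l-j}}\sim \frac{(\cdot;q)_l}{(q;q)_l},
\]
to a single sum of the form $\sum_l \frac{(-q;q)_l^2 q^{l}}{(q;q)_l\,(q;q^2)_l}$-like. That sum is then handled by the $q$-Gauss sum or the ${}_2\phi_1$ evaluation at a special argument, giving $(-q;q)_\infty^4$ times a simple factor. The negative powers $q^{-1}$ in \eqref{thm11}, \eqref{thm12} arise from the linear terms $-2j$ and $-3j$. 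These must be tracked as boundary terms, since the $j=0,1$ terms behave differently.

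The main obstacle is the middle step, namely the coupled $j,k,l$ sum with cross terms $-jk-jl$. Here the choice of order of summation and the exact normalization must be right so that a terminating balanced series appears. If direct summation stalls, we will fall back on the constant-term approach. There, everything reduces to the constant term of products of theta functions, and we finish by comparing both sides as modular forms: we check enough coefficients after using the Sturm bound on the weight-$3$ (resp. $3/2$) eta-quotient $J_2^6/J_1^6$ (resp. $J_2^3/J_1^3$).
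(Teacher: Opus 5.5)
There is a genuine gap: the step you yourself flag as the main obstacle is never resolved, and the fallback arguments would not work.

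Your opening move is correct and matches a step in the paper. The $i$-sum is Euler's formula and gives $(-q^{-l};q)_\infty=2Pq^{-\binom{l+1}{2}}(-q;q)_l$ for \eqref{thm11}, $(-q^{1-l};q)_\infty=Pq^{-\binom{l}{2}}(-1;q)_l$ for \eqref{thm13}, and $(-q^{1-2l};q^2)_\infty=Hq^{-l^2}(-q;q^2)_l$ for \eqref{thm12}, where $P=(-q;q)_\infty$ and $H=(-q;q^2)_\infty$. After that there is no working argument:
\begin{itemize}
\item The $k$-sum, e.g.\ $\sum_k q^{k^2-jk+2k}/(q;q)_k$ in \eqref{thm11}, is a Rogers--Ramanujan type series, not a product. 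So your ``cleaner alternative'', which begins after the $i$- and $k$-sums, has nothing to start from.
\item The indices $j$ and $l$ range independently, so there is no $(q;q)_{l-j}$ and hence no $q$-Chu--Vandermonde structure.
\item The constant-term fallback is misdescribed. $\sum_j q^{j^2-2j}z^j/(q;q)_j$ is not an infinite product in $z$, because Euler's formula needs $q^{\binom j2}$. You also do not say how the resulting multivariate constant term would actually be computed.
\item The Sturm-bound finish is invalid. $J_2^6/J_1^6$ and $J_2^3/J_1^3$ have weight $0$, not $3$ and $3/2$. More seriously, modularity of the Nahm sum is exactly what is unknown, so you cannot compare the two sides as modular forms unless the left side has already been reduced to explicit theta quotients.
\end{itemize}

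The missing idea uses the tadpole shape of the quadratic form. After the $i$-sum, set $r=k-j\in\mathbb{Z}$ and hold it fixed. In \eqref{thm11}, for example, the exponent becomes $j(j+r-l)+r^2+2r+\binom{l+1}{2}$, with $j$-dependent denominator $(q;q)_j(q;q)_{j+r}$. The $j$-sum is then evaluated by the finite Durfee-rectangle identity of Lemma \ref{lem:1} (in base $q^2$ for \eqref{thm12}):
\begin{align*}
\sum_{j\geq\max(0,-r)}\frac{q^{j(j+r-l)}}{(q;q)_j(q;q)_{j+r}}=\frac{1}{J_1}\sum_{a=0}^{l}q^{a(r-l+a)}\begin{bmatrix}l\\a\end{bmatrix}_q .
\end{align*}
Writing $l=a+b$ then decouples everything:
\begin{itemize}
\item $r$ occurs only in a factor such as $q^{r^2+(a+2)r}$. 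After splitting $a$ by parity and completing the square, this gives a theta series times a power of $q$. That, not a boundary effect at small $j$, is where the $q^{-1}$ comes from.
\item The $b$-sum is the Lebesgue identity.
\item The remaining single sums over $m$ are evaluated by the ${}_2\phi_2$ summations and the limiting $q$-Gauss sum, as in Lemma \ref{lem:4}.
\end{itemize}
Each parity class then contributes exactly half of the right-hand side.
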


\begin{theorem}[{cf.\cite[~Conjecture~3.4]{CW25}}]\label{thm:2}
We have
\begin{align}
\sum_{i,j,k,l \geq 0}
\frac{
  q^{i^2 + 2j^2 + 2k^2 + 2l^2 - 2ij + 2ik - 2jk - 2jl + 2i - 2j + 2k}
}{
  (q^2; q^2)_i (q^2; q^2)_j (q^2; q^2)_k (q^2; q^2)_l
}
&= 3 \frac{J_6^3}{J_1 J_2 J_4},\label{thm2}\\
\sum_{i,j,k,l \geq 0} 
\frac{
  q^{\frac{1}{2}(i^2 + i) + j^2 + k^2 + l^2 - ij + ik - jk - jl - j + k+l}
}{
  (q; q)_i (q; q)_j (q; q)_k (q; q)_l
}
&= 3 \frac{J_2 J_3^3}{J_1^4}.\label{thm21}
\end{align}
\end{theorem}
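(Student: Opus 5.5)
The plan is to reduce each quadruple sum to a lower-rank sum by summing out variables that enter only through simple linear forms, and then to evaluate the remaining sum with classical tools. The tools are Euler's identities $\sum_n q^{\binom{n}{2}}z^n/(q;q)_n=(-z;q)_\infty$ and $\sum_n z^n/(q;q)_n=1/(z;q)_\infty$, the $q$-binomial theorem, and the Jacobi triple product.

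\textbf{Identity \eqref{thm21}.} In the exponent, $i$ appears only through $\tfrac12(i^2+i)-ij+ik$. Summing over $i$ with Euler's identity gives the factor $(-q^{1-j+k};q)_\infty$. Similarly, $l$ appears only through $l^2-jl+l$. So I first perform the $i$-sum. Then I write $(-q^{1-j+k};q)_\infty=(-q^{1-j+k};q)_{j-k}(-q;q)_\infty$ when $j\ge k$, with the analogous inverse factor when $j<k$. This turns the sum into a rank-three sum in $j,k,l$, with $(-q;q)_\infty$ pulled out.

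For the $l$-sum at fixed $j$, I would use the identity $\sum_l q^{l^2+l-jl}/(q;q)_l$ together with the constant-term method. Concretely, I rewrite $q^{-jl}$ and $q^{-jk}$ through a variable $z^{j}$ and sum $j$ freely. This produces a product of theta functions, and the answer is the constant term in $z$ of that product. The constant term of such a product is a finite combination of theta quotients, by the standard quintuple/triple-product dissection. I would then match this combination against $3J_2J_3^3/J_1^4$ using known $3$-dissection identities for $J_1^3$ and $1/J_1$.

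\textbf{Identity \eqref{thm2}.} This is the same sum as \eqref{thm21} with $q\to q^2$ and with shifted linear terms. I would carry out the same reduction, and then evaluate the final theta constant-term identity modulo $6$, aiming for $3J_6^3/(J_1J_2J_4)$.

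\textbf{Theorem~\ref{thm:1}.} For each identity, I sum out $i$, which appears only through $\tfrac12(i^2\mp i)-il$. This yields $(-q^{\ast-l};q)_\infty$. In \eqref{thm11} the $i$-sum gives $(-q^{-l};q)_\infty$, which for $l\ge0$ carries the factor $(1+q^{0})=2$; this accounts for the constants $8$, $2$, $4$. I then handle $l$, which appears in $l^2-jl-il$, by an integral representation in a variable $z$. The sum over $k$ (through $k^2-jk+2k$) is treated the same way. The whole sum becomes the constant term of a product of three or four Jacobi theta series in $z$, and I would evaluate it by expanding one factor and using $\sum_n q^{n^2}z^n$-type orthogonality, reducing to a sum over a lattice that is recognisable as $\theta_2(q)^k$-type products. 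The targets $J_2^6/J_1^6$ and $J_2^3/J_1^3$ suggest we should land on powers of $\psi(q)=J_2^2/J_1$ divided by extra factors: since $J_2^6/J_1^6=\psi(q)^3/J_2^0\cdot J_1^{-3}\ldots$, I would aim for $\sum_{n}q^{n(n+1)/2}$-squares via the two-squares/four-triangular-numbers theorem.

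The main obstacle will be the final constant-term (theta-function) identity, especially keeping track of the negative indices produced when $-j$ or $-l$ shifts the Euler product. I would handle this first by extending sums to all integers using $1/(q;q)_{n}=0$ for $n<0$. If that fails, I would verify the resulting theta identity with the valence formula for modular functions on $\Gamma_0(N)$, checking finitely many coefficients.
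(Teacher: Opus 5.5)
Your proposal has a genuine gap. After the Euler summation over $i$, the whole evaluation is handed to a ``constant-term method'' that you assert rather than carry out. The claim it rests on, that the constant term is ``a finite combination of theta quotients by the standard quintuple/triple-product dissection'', does not hold here.

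\textbf{Why the constant-term step fails as stated.} In \eqref{thm21} the $l$-sum $\sum_l q^{l^2+l-jl}/(q;q)_l$ is not ``similar'' to the $i$-sum. With $q^{l^2}$ in base $q$ it is a Rogers--Ramanujan-type series, not an Euler product. If you absorb the cross terms $-jk,-jl$ into $\tfrac12(j-k)^2+\tfrac12(j-l)^2$, no diagonal weight is left for $j$, so the free $j$-sum produces a factor $1/(x;q)_\infty$ rather than a theta function. The same happens when you decouple $(-q;q)_\infty/(-q;q)_{k-j}$. Constant terms of products containing such factors are in general Appell--Lerch (mock-type) objects, and showing they collapse to theta quotients is exactly what has to be proved. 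Your valence-formula fallback presupposes an identity among genuine modular forms, which you never reach.

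\textbf{Your first step for \eqref{thm21} is the paper's dead end.} Summing out $i$ first is precisely the reduction the paper examines in its concluding section. Continued in the natural way, it lands on the triple sum of Conjecture~\ref{conj:SW14}, which is open. So the step you call ``standard'' hides the entire difficulty. The same applies to \eqref{thm2}: after the $i$-sum you still need a rank-three tadpole-type evaluation, which the paper imports from the proven Shi--Wang identity \eqref{SW}. Your plan offers no substitute, and the discussion of Theorem~\ref{thm:1} is outside this statement.

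\textbf{What the paper uses instead.} The missing ingredients are concrete. The key device is the finite Durfee reduction, Lemma~\ref{lem:1}. It trades a pair of summation variables for their difference $r$, which then runs over all of $\mathbb{Z}$, so the $r$-sum becomes a theta function.
\begin{itemize}
\item For \eqref{thm21} the paper does \emph{not} sum $i$ first. It sets $r=j-l$ and $N=i+k$ and sums $l$ by Lemma~\ref{lem:1}. The $r$-sum gives $\theta_0$ or $\theta_1$ according to the parity of $h=N-a$. The finite $q$-binomial theorem merges $i,k$ into $(-q;q)_N$. Lebesgue's identity (Lemma~\ref{lem:3}) and the two Slater-list identities of Lemma~\ref{lem:sla} then evaluate the remaining single sums.
\item For \eqref{thm2} the paper does sum $i$ first, then applies Lemma~\ref{lem:1} with $r=k-j$. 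It expands $H/(-q;q^2)_c$ as an Euler sum in $d$ and completes the square in $c$. This isolates the triple sum of Lemma~\ref{lem:SW}, followed by an even/odd dissection of $f(t,t^5)$.
\end{itemize}
In both cases the constant $3$ and the factor $J_3^3$ come from the same place. The resulting combination of theta products is recognised via \eqref{2_31} as the lattice sum $\sum_{m,n}q^{m^2+mn+n^2+m+n}$, and the cubic theta identity \eqref{cubic} evaluates it. They do not come from $3$-dissections of $J_1^3$ and $1/J_1$.
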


The remainder of this paper is structured as follows. Section \ref{sec:pre} contains some preliminary results that will be used later. In Section \ref{sec:11}, we give the proof of Theorem \ref{thm:1}, and in Section \ref{sec:12}, we present the proof of Theorem \ref{thm:2}. Finally, in Section \ref{sec:conclusion}, we establish a connection between \eqref{thm21} and a conjecture, which appears in \cite{SW25}.

\section{Preliminaries}\label{sec:pre}

In this section, we introduce and prove some preliminary results and concepts that will facilitate the proofs in the subsequent sections. First, we consider Ramanujan's general theta function $f(a, b)$ defined by 
\begin{align}
f(a, b):=\sum_{n=-\infty}^{\infty}a^{n(n+1)/2}b^{n(n-1)/2},\quad |ab|<1.\label{theta}
\end{align}
It satisfies the well-known Jacobi triple product identity~\cite{andtp, ber06}
\begin{align}\label{jacobi}
f(a, b)=(-a, -b, ab, ab)_{\infty}.
\end{align}
Then we recall two important $q$-exponential functions.
\begin{lemma}[{\cite[Appendix II.1-II.2]{GR90}}]\label{lem:0}
We have
\begin{align}
e_q(z)=\sum_{n\geq 0}\frac{z^n}{(q; q)_n}&=\frac{1}{(z; q)_{\infty}},\quad |z|<1,\label{II1}\\
E_q(z)=\sum_{n\geq 0}\frac{q^{\binom{n}{2}}z^n}{(q; q)_n}&=(-z; q)_{\infty}.\label{II2}
\end{align}
\end{lemma}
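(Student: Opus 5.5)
The statement is Euler's two classical $q$-exponential identities, and I would prove both by the same scheme: derive a first-order $q$-difference equation for the series and iterate it to obtain the infinite product. I will first work with formal power series in $z$ and then justify convergence.

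For \eqref{II1}, put $F(z)=\sum_{n\ge0} z^n/(q;q)_n$. Since $(q;q)_n=(1-q^n)(q;q)_{n-1}$, we have
\begin{align*}
F(z)-F(qz)=\sum_{n\ge1}\frac{(1-q^n)z^n}{(q;q)_n}=z\sum_{n\ge1}\frac{z^{n-1}}{(q;q)_{n-1}}=zF(z).
\end{align*}
This gives $F(z)=F(qz)/(1-z)$. Iterating $N$ times yields $F(z)=F(q^Nz)/(z;q)_N$. Now let $N\to\infty$. For $|z|<1$ and $|q|<1$, the series $F$ converges absolutely, because $1/|(q;q)_n|$ is bounded by $1/(|q|;|q|)_\infty$. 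Dominated convergence then gives $F(q^Nz)\to F(0)=1$, so $F(z)=1/(z;q)_\infty$.

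For \eqref{II2}, put $G(z)=\sum_{n\ge0} q^{\binom n2}z^n/(q;q)_n$. The same manipulation gives
\begin{align*}
G(z)-G(qz)=\sum_{n\ge1}\frac{q^{\binom n2}z^n}{(q;q)_{n-1}}=z\sum_{m\ge0}\frac{q^{\binom{m}{2}+m}z^m}{(q;q)_m}=zG(qz),
\end{align*}
where I used $\binom{m+1}{2}=\binom m2+m$. Hence $G(z)=(1+z)G(qz)$, and iterating gives $G(z)=(-z;q)_N\,G(q^Nz)$. Here the factor $q^{\binom n2}$ makes $G$ an entire function of $z$, so $G(q^Nz)\to1$ for every $z$, and therefore $G(z)=(-z;q)_\infty$ with no restriction on $z$.

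The only step that needs care is the limit $N\to\infty$. This requires uniform bounds on the coefficients. It is also the reason \eqref{II1} needs the hypothesis $|z|<1$, which is exactly where $1/(z;q)_\infty$ is analytic, whereas \eqref{II2} holds for all $z$. Everything else is routine telescoping.

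An alternative route would be to obtain \eqref{II2} from the Jacobi triple product \eqref{jacobi} or from the $q$-binomial theorem. The difference-equation argument above is self-contained, so I would use it.
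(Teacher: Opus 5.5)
Your proof is correct and self-contained. Both $q$-difference equations, $F(z)-F(qz)=zF(z)$ and $G(z)-G(qz)=zG(qz)$, are computed correctly. The bound $1/|(q;q)_n|\le 1/(|q|;|q|)_\infty$ supplies the domination needed to get $F(q^Nz)\to 1$ for $|z|<1$ and $G(q^Nz)\to 1$ for all $z$. For $|z|<1$ no factor $1-q^kz$ vanishes, so the iteration is legitimate.

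The paper does not prove the lemma; it only cites Gasper--Rahman. There, \eqref{II1} and \eqref{II2} are recorded alongside the $q$-binomial theorem ${}_1\phi_0(a;-;q,z)=(az;q)_\infty/(z;q)_\infty$. The first is its case $a=0$, and the second follows by replacing $z$ with $-z/a$ and letting $a\to\infty$. The $q$-binomial theorem is itself proved there by essentially your method, via $h_a(z)=\frac{1-az}{1-z}h_a(qz)$. That route gets both identities from one more general result. The cost is a termwise limit $a\to\infty$ that must be justified separately. Your direct route needs no such limit, and the validity of \eqref{II2} for every $z$ follows at once from $G$ being entire.

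One small wording correction. $1/(z;q)_\infty$ is meromorphic on $\mathbb{C}$ with poles only at $z=q^{-k}$ ($k\ge 0$), so $|z|<1$ is not ``exactly where it is analytic''. It is the largest disc about $0$ on which it is analytic, since the pole at $z=1$ lies on the boundary. That is why the series in \eqref{II1} has radius of convergence $1$.
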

Recall that the finite $q$-binomial theorem gives
\begin{align}
\sum_{k=0}^Nq^{\frac{k^2+k}{2}}\begin{bmatrix}N\\ k\end{bmatrix}_q=(-q; q)_N,\label{finite-q-bino}
\end{align}
where the $q$-binomial coefficient is
\begin{align*}
\begin{bmatrix}c\\a \end{bmatrix}_q:=\frac{(q; q)_c}{(q; q)_a(q; q)_{c-a}}.
\end{align*}
The next result is a finite Durfee reduction.

\begin{lemma}\label{lem:1}
For every integer $r$ and every $c\geq 0$, we have
\begin{align}
\sum_{k\geq\max(0, -r)}\frac{q^{k(k+r-c)}}{(q; q)_k(q; q)_{k+r}}=\frac{1}{J_1}\sum_{a=0}^cq^{a(r-c+a)}\begin{bmatrix}c\\a \end{bmatrix}_q.\label{lem1}
\end{align}
\end{lemma}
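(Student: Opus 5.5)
The plan is to prove Lemma \ref{lem:1} by induction on $c$, with $r$ allowed to vary over all integers. Write
\begin{align*}
S_c(r):=\sum_{k\geq\max(0,-r)}\frac{q^{k(k+r-c)}}{(q;q)_k(q;q)_{k+r}},\qquad T_c(r):=\sum_{a=0}^{c}q^{a(r-c+a)}\begin{bmatrix}c\\a\end{bmatrix}_q .
\end{align*}
The claim is then $J_1S_c(r)=T_c(r)$. Throughout we use the convention $1/(q;q)_m=0$ for $m<0$. This convention makes the lower limit $\max(0,-r)$ automatic, so index shifts can be done freely.

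\textbf{Base case $c=0$.} Here we need $S_0(r)=\sum_k q^{k(k+r)}/((q;q)_k(q;q)_{k+r})=1/J_1$, and $T_0(r)=1$. This is the classical Durfee rectangle identity. Combinatorially, every partition has a unique maximal $k\times(k+r)$ rectangle in its diagram. The square $q^{k(k+r)}$ accounts for that rectangle. The factor $1/(q;q)_k$ counts the parts below the rectangle, which are at most $k+r$ in size, read through the conjugate partition. The factor $1/(q;q)_{k+r}$ counts what lies to the right of the rectangle.

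Alternatively, one can argue analytically. By \eqref{II1}, $\sum_k q^{k^2+kr}/((q;q)_k(q;q)_{k+r})$ is the constant term in $z$ of $\sum_{m,n}z^{m-n}q^{mn}/((q;q)_m(q;q)_n)$. Summing over one index with \eqref{II1} gives $\frac1{J_1}\sum_m z^m\,(q^{m+1};q)_\infty^{-1}\cdot\ldots$, i.e. a form of Cauchy's identity. I would simply cite the standard Durfee rectangle argument, since it is the cleanest.

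\textbf{Recurrence for $T$.} Apply the Pascal rule $\begin{bmatrix}c\\a\end{bmatrix}_q=q^{a}\begin{bmatrix}c-1\\a\end{bmatrix}_q+\begin{bmatrix}c-1\\a-1\end{bmatrix}_q$.
\begin{itemize}
\item The first piece contributes $q^{a(r-c+a)+a}=q^{a(r-(c-1)+a)}$, which gives $T_{c-1}(r)$.
\item In the second piece, put $a=b+1$. The exponent becomes $(b+1)(r-c+1+b)=b(r-c+2+b)+(r-c+1)$, so this piece gives $q^{r-c+1}T_{c-1}(r+1)$.
\end{itemize}
Hence
\begin{align*}
T_c(r)=T_{c-1}(r)+q^{r-c+1}T_{c-1}(r+1).
\end{align*}

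\textbf{Recurrence for $S$.} Compute $S_c(r)-S_{c-1}(r)$ termwise. The $k$-th term carries the factor $q^{k(k+r-c)}(1-q^k)$, and $(1-q^k)/(q;q)_k=1/(q;q)_{k-1}$. Shifting $k=j+1$, the exponent is $(j+1)(j+r+1-c)=j\bigl(j+(r+1)-(c-1)\bigr)+(r+1-c)$, while the denominator becomes $(q;q)_j(q;q)_{j+r+1}$. This gives exactly $q^{r-c+1}S_{c-1}(r+1)$, so $S$ satisfies the same recurrence as $T$.

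Induction on $c$ from the base case, simultaneously for all $r$, then finishes the proof.

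The only delicate points are the following:
\begin{itemize}
\item the boundary terms when $r<0$, which the convention $1/(q;q)_{m}=0$ for $m<0$ handles, since the shifted sum for $r+1$ starts at the correct index;
\item absolute convergence when $r-c$ is very negative. The exponent is quadratic in $k$, so all sums converge formally and the termwise manipulations are justified.
\end{itemize}
I expect the base case to be the only place needing an outside argument, and the Durfee rectangle bijection supplies it.
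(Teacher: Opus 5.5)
Your proof is correct and follows essentially the paper's route: induction on $c$, for all $r$ simultaneously, from the Durfee-rectangle base case, using a two-term recurrence that both sides satisfy (termwise splitting on the left, the $q$-Pascal rule on the right). The only difference is which splitting and Pascal rule you use: you get $S_c(r)=S_{c-1}(r)+q^{r-c+1}S_{c-1}(r+1)$ from $1=q^k+(1-q^k)$, while the paper gets $L_c(r)=L_{c-1}(r-1)+q^rL_{c-1}(r)$ from $1=(1-q^{k+r})+q^{k+r}$, and this choice is immaterial.
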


\begin{proof}
For $c=0$, we claim that \eqref{lem1} is the identity corresponding to the Durfee-rectangle decomposition
\begin{align*}
\sum_{k\geq \max(0, -r)}\frac{q^{k(k+c)}}{(q; q)_k(q; q)_{k+r}}=\frac{1}{J_1}.
\end{align*}
Indeed, for $r\geq 0$, this classifies a partition by its maximal $k\times (k+r)$ rectangle. For $r<0$, write $r=-h$ with $h>0$ and set $j=k-h$. The sum becomes
\begin{align*}
\sum_{j\geq 0}\frac{q^{j(j+h)}}{(q; q)_j(q; q)_{j+h}},
\end{align*}
which is the case $h>0$.

Let the left side of \eqref{lem1} be $L_c(r)$, then we have
\begin{align*}
L_{c-1}(r-1)&=\sum_{k\geq \max(0, -r+1)}\frac{q^{k(k+r-c)}(1-q^{k+r})}{(q; q)_k(q; q)_{k+r}}=\sum_{k\geq \max(0, -r)}\frac{q^{k(k+r-c)}(1-q^{k+r})}{(q; q)_k(q; q)_{k+r}},\\
q^rL_{c-1}(r)&=\sum_{k\geq \max(0, -r)}\frac{q^{k(k+r-c)}q^{k+r}}{(q; q)_k(q; q)_{k+r}},
\end{align*}
where the second equality in the first line follows because the numerator vanishes at the possible endpoint $k=-r$. Adding the two lines gives
\begin{align*}
L_c(r)=L_{c-1}(r-1)+q^rL_{c-1}(r).
\end{align*}
Let the right side of \eqref{lem1} be $R_c(r)$.
Using the convention that the Gaussian binomial coefficient vanishes outside its standard range $0\leq a\leq c$, the $q$-Pascal identity
\begin{align*}
\begin{bmatrix}c\\a\end{bmatrix}_q=\begin{bmatrix}c-1\\a\end{bmatrix}_q+q^{c-a}\begin{bmatrix}c-1\\a-1\end{bmatrix}_q
\end{align*}
shows that $R_c(r)$ satisfies the same recurrence
\begin{align*}
R_c(r)=R_{c-1}(r-1)+q^rR_{c-1}(r).
\end{align*}
Since $R_0(r)=1/J_1=L_0(r)$, induction on $c$ proves the result.
\end{proof}

In 2016, Calinescu, Milas and Penn~\cite{CMP16} considered a family of Nahm sums associated with the tadpole Cartan matrix $T_r=(a_{ij})_{r\times r}$ with entries defined as 
\begin{align*}
a_{ij}=\begin{cases}
1,&\text{ if }i=j=r,\\
2, &\text{ if }i=j< r,\\
-1, & \text{ if }|i-j|=1,\\
0, &\text{ otherwise}. 
\end{cases}
\end{align*}
Let 
\begin{align}
\chi_r(x_1, ..., x_r)=\chi_r(x_1, ..., x_r; q):=\sum_{n=(n_1, ..., n_r)^T\in (\bZ_{\geq 0})^r}\frac{q^{\frac{1}{2}n^TT_rn}x_1^{n_1}\cdots x_{r}^{n_r}}{(q; q)_{n_1}\cdots (q; q)_{n_r}}
\end{align}
be a generalized tadpole Nahm sum. Our next result requires the case $r=4$. The rank four generalized tadpole Nahm sum is then given by
\begin{align}
\chi_4(x_1, x_2, x_3, x_4; q):=\sum_{n_1, n_2, n_3, n_4\geq 0}\frac{x_1^{n_1}x_2^{n_2}x_3^{n_3}x_4^{n_4}q^{n_1^2+n_2^2+n_3^2+n_4^2/2-n_1n_2-n_2n_3-n_3n_4}}{(q; q)_{n_1}(q; q)_{n_2}(q; q)_{n_3}(q; q)_{n_4}}.\label{rankfour}
\end{align}
We have discovered the following rank reduction formula, which will serve as the starting point for the subsequent proofs.
\begin{lemma}\label{lem:2}
For arbitrary nonzero $u$, $w$ and $t$, we have
\begin{align}\label{lem2}
\chi_4(u, u^{-1}, w, t; q) = \frac{1}{J_1}
\sum_{r \in \mathbb{Z}} \sum_{a, b, d \geq 0}
\frac{u^r w^{a+b} t^d}{(q; q)_a (q; q)_b (q; q)_d}
\, q^{r^2 + ar + a^2 + ab + b^2 + d^2 / 2 - (a+b)d}.
\end{align}
\end{lemma}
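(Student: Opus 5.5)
The plan is to perform the sum over $n_1,n_2$ in \eqref{rankfour} along the lines $n_1-n_2=r$, and to evaluate that inner sum with Lemma~\ref{lem:1}, taking $c=n_3$.

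\textbf{Step 1: isolate the $(n_1,n_2)$ part.} Put $x_1=u$, $x_2=u^{-1}$, $x_3=w$, $x_4=t$ in \eqref{rankfour}. The variables $n_1,n_2$ then enter only through the factor $u^{n_1-n_2}q^{n_1^2+n_2^2-n_1n_2-n_2n_3}$. Set $n_2=k$ and $n_1=k+r$, where $r\in\mathbb{Z}$ and $k\ge\max(0,-r)$. A direct expansion gives
\begin{align*}
n_1^2+n_2^2-n_1n_2-n_2n_3=r^2+k(k+r-n_3).
\end{align*}
So, for fixed $n_3,n_4$, the $(n_1,n_2)$-sum equals
\begin{align*}
\sum_{r\in\mathbb{Z}}u^rq^{r^2}\sum_{k\ge\max(0,-r)}\frac{q^{k(k+r-n_3)}}{(q;q)_k(q;q)_{k+r}}.
\end{align*}

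\textbf{Step 2: apply Lemma~\ref{lem:1}.} With $c=n_3$, Lemma~\ref{lem:1} turns the inner sum into
\begin{align*}
\frac{1}{J_1}\sum_{a=0}^{n_3}q^{a(r-n_3+a)}\begin{bmatrix}n_3\\a\end{bmatrix}_q.
\end{align*}
Now write $n_3=a+b$ with $a,b\ge 0$. The factor $(q;q)_{n_3}^{-1}$ combines with the Gaussian binomial to give $1/\bigl((q;q)_a(q;q)_b\bigr)$. The exponents collect as
\begin{align*}
a(r-b)+(a+b)^2=ar+a^2+ab+b^2,
\end{align*}
and the remaining terms are $n_4^2/2-n_3n_4=d^2/2-(a+b)d$ with $d=n_4$. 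The weight becomes $u^rw^{a+b}t^d$. This is exactly the right-hand side of \eqref{lem2}.

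\textbf{Step 3: justify the rearrangement.} The only point needing care is reordering the quadruple sum into the new indices $(r,a,b,d)$. The change of variables $(n_1,n_2,n_3,n_4)\mapsto(r,k,a,b,d)$, followed by summing out $k$, is a bijective regrouping. For $|q|<1$ the series converge absolutely, because the quadratic forms involved are positive definite; in particular the factor $q^{r^2}$ dominates $u^r$ for any fixed nonzero $u$. So Fubini applies. Alternatively one can argue purely at the level of formal power series in $q$, with coefficients Laurent polynomials in $u,w,t$.

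The main obstacle is the algebraic bookkeeping in Step~2, namely checking that $a(r-b)+(a+b)^2$ collapses to $ar+a^2+ab+b^2$. A secondary point is the range $k\ge\max(0,-r)$ for negative $r$, but Lemma~\ref{lem:1} already handles that case.
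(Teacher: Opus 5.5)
Your proof is correct and follows the paper's argument. The paper likewise substitutes $(n_1,n_2,n_3,n_4)=(k+r,k,c,d)$, sums over $k\ge\max(0,-r)$ using Lemma~\ref{lem:1}, and writes $c=a+b$ to absorb the Gaussian binomial; your remark on absolute convergence (or formal power series) justifying the regrouping covers a point the paper leaves implicit.
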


\begin{proof}
For $r\in\bZ$, we set
\begin{align*}
(n_1, n_2, n_3, n_4)= (k+r, k, c, d).
\end{align*}
Then, by first summing over $k\geq \max(0, -r)$, we have
\begin{align*}
\chi_4(u, u^{-1}, w, t; q)&=\sum_{r\in \bZ}\sum_{c, d\geq 0}\frac{u^rw^ct^dq^{r^2+c^2-cd+d^2/2}}{(q; q)_c(q; q)_d}\sum_{k\geq \max(0, -r)}\frac{q^{k(k+r-c)}}{(q; q)_k(q; q)_{k+r}}\\
&=\frac{1}{J_1}\sum_{r\in \bZ}\sum_{c, d\geq 0}\frac{u^rw^ct^dq^{r^2+c^2-cd+d^2/2}}{(q; q)_c(q; q)_d}\sum_{a=0}^cq^{a(r-c+a)}\begin{bmatrix}c\\a\end{bmatrix}_q\quad\text{(by Lemma \ref{lem:1})}\\
&=\frac{1}{J_1}\sum_{r\in \bZ}\sum_{a, b, d\geq 0}\frac{u^rw^{a+b}t^dq^{r^2+ar+a^2+ab+b^2+d^2/2-(a+b)d}}{(q; q)_d(q; q)_{a+b}}\begin{bmatrix}a+b\\a\end{bmatrix}_q\quad\text{(using $c=a+b$)}\\
&=\frac{1}{J_1}
\sum_{r \in \mathbb{Z}} \sum_{a, b, d \geq 0}
\frac{u^r w^{a+b} t^d}{(q; q)_a (q; q)_b (q; q)_d}
\, q^{r^2 + ar + a^2 + ab + b^2 + d^2 / 2 - (a+b)d},
\end{align*}
which is precisely the desired identity \eqref{lem2}.
\end{proof}

The next result is the celebrated Lebesgue identity, but note that the form we will use differs from the classical one by a shift of the parameter.

\begin{lemma}[Lebesgue identity]\label{lem:3}
We have
\begin{align}
\sum_{n\geq 0}\frac{(-z; q)_nq^{n(n+1)/2}}{(q; q)_n}=(-q; q)_{\infty}(-zq; q^2)_{\infty}.
\end{align}
\end{lemma}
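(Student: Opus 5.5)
We prove the shifted Lebesgue identity by expanding $(-z;q)_n$ with the finite $q$-binomial theorem, interchanging the order of summation, and summing the inner series in closed form with Lemma \ref{lem:0}. This turns the left side into a single series in $z$, which we then identify with the expansion of the right side.

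First write
\[
(-z;q)_n=\sum_{k=0}^{n} q^{\binom{k}{2}} z^k \begin{bmatrix}n\\k\end{bmatrix}_q .
\]
Substitute this into the left side and set $n=k+m$. This gives
\[
\sum_{k\geq 0}\frac{z^k q^{\binom{k}{2}}}{(q;q)_k}\sum_{m\geq 0}\frac{q^{(k+m)(k+m+1)/2}}{(q;q)_m}.
\]
Since $(k+m)(k+m+1)/2=k(k+1)/2+\binom{m}{2}+m(k+1)$, the inner sum is
\[
q^{k(k+1)/2}\sum_{m\ge 0}\frac{q^{\binom m2}(q^{k+1})^m}{(q;q)_m}=q^{k(k+1)/2}(-q^{k+1};q)_\infty
\]
by \eqref{II2}. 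Writing $(-q^{k+1};q)_\infty=(-q;q)_\infty/(-q;q)_k$, the left side becomes
\[
(-q;q)_\infty\sum_{k\geq 0}\frac{z^k q^{k^2}}{(q;q)_k(-q;q)_k}=(-q;q)_\infty\sum_{k\ge0}\frac{q^{k^2}z^k}{(q^2;q^2)_k}.
\]

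For the right side, apply \eqref{II2} with base $q^2$:
\[
(-zq;q^2)_\infty=\sum_{k\geq 0}\frac{q^{2\binom k2}(zq)^k}{(q^2;q^2)_k}=\sum_{k\ge 0}\frac{q^{k^2}z^k}{(q^2;q^2)_k}.
\]
This is exactly the remaining sum, so the two sides agree.

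The only delicate step is the interchange of summations. It is justified because for $|q|<1$ all the series involved converge absolutely, and the argument can equally be read as an identity of formal power series in $z$ and $q$. The rest is routine exponent bookkeeping.
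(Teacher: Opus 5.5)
Your proof is correct. The exponent bookkeeping checks out: $\binom{k}{2}+\binom{k+1}{2}=k^2$, $(q;q)_k(-q;q)_k=(q^2;q^2)_k$, and $q^{2\binom{k}{2}}q^k=q^{k^2}$. The interchange of summation is justified by absolute convergence for $|q|<1$.

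The paper gives no proof of this lemma. It quotes the classical Lebesgue identity $\sum_{n\ge0}(a;q)_nq^{\binom{n+1}{2}}/(q;q)_n=(-q;q)_\infty(aq;q^2)_\infty$ and specializes $a=-z$. Your route is therefore different only in being self-contained. It uses nothing beyond Euler's formula \eqref{II2}, applied once in base $q$ and once in base $q^2$, together with the general finite $q$-binomial (Rothe) expansion $(-z;q)_n=\sum_{k}q^{\binom{k}{2}}z^k\begin{bmatrix}n\\k\end{bmatrix}_q$.

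Do not attribute that expansion to \eqref{finite-q-bino}. The paper's \eqref{finite-q-bino} is only the specialization $z=q$ and would not suffice here, so cite the general statement as a standard result.

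A side benefit of your derivation is the intermediate form $(-q;q)_\infty\sum_{k\ge0}q^{k^2}z^k/(q^2;q^2)_k$. It makes transparent why the right side factors as $(-q;q)_\infty$ times a base-$q^2$ Euler product. That is exactly the shape the paper exploits when it sums over $b$ with $z=q^{2m}$ or $z=q^{2m+1}$.
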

For a base $q$ with $|q|<1$, define
\begin{align}
{}_r\phi_s \left[
\begin{matrix}
a_1, \ldots, a_r \\
b_1, \ldots, b_s
\end{matrix}
; q, z
\right]
:= \sum_{n \geq 0}
\frac{(a_1, \ldots, a_r; q)_n}{(q, b_1, \ldots, b_s; q)_n}
\left( (-1)^n q^{\binom{n}{2}} \right)^{1+s-r} z^n.
\end{align}
Recall that the following two classical summations \cite[Appendix II.10-II.11]{GR90}
\begin{align}
{}_2\phi_2 \left[ \begin{matrix} a, q/a \\ -q, b \end{matrix} ; q, -b \right]
&= \frac{(ab, bq/a; q^2)_\infty}{(b; q)_\infty},\label{II10}\\
{}_2\phi_2 \left[ \begin{matrix} a^2, b^2 \\ abq^{1/2}, -abq^{1/2} \end{matrix} ; q, -q \right]
&= \frac{(a^2q, b^2q; q^2)_\infty}{(q, a^2b^2q; q^2)_\infty},\label{II11}
\end{align}
and the limiting $q$-Gauss sum
\begin{align}
\sum_{n \geq 0}
\frac{(a; q)_n}{(q, c; q)_n}
(-1)^n q^{\binom{n}{2}}
\left(\frac{c}{a}\right)^n
= \frac{(c/a; q)_\infty}{(c; q)_\infty}.\label{II8}
\end{align}
The identity \eqref{II8} follows from the ordinary $q$-Gauss summation \cite[Appendix II.8]{GR90} by letting one upper parameter tend to infinity. With these results in hand, we obtain the following six identities that will be used subsequently. 

\begin{lemma}\label{lem:4}
We have
\begin{align}
\sum_{m \geq 0} \frac{q^{m^2}(-1; q^2)_m}{(q; q)_{2m}}
&= \frac{(-q; q^2)_\infty}{(q; q^2)_\infty}, \label{lem41} \\
\sum_{m \geq 0} \frac{q^{m(m+1)}(-q; q^2)_m}{(q; q)_{2m+1}}
&= \frac{(-q; q^2)_\infty}{(q^2; q^4)_\infty^2}, \label{lem42} \\
\sum_{m \geq 0} \frac{q^{m^2 - m}(-q; q^2)_m}{(q; q)_{2m}}
&= 2 \frac{(-q^2; q^2)_\infty}{(q; q^2)_\infty}, \label{lem43} \\
\sum_{m \geq 0} \frac{q^{m^2}(-q^2; q^2)_m}{(q; q)_{2m+1}}
&= \frac{(-q; q^2)_\infty}{(q; q^2)_\infty}, \label{lem44} \\
\sum_{m \geq 0} \frac{q^{2m^2-m}(-q; q^4)_m}{(q^2; q^2)_{2m}}
&= \frac{(-q; q^4)_\infty}{(q^2; q^4)_\infty}, \label{lem45} \\
\sum_{m \geq 0} \frac{q^{2m^2+m}(-q^3; q^4)_m}{(q^2; q^2)_{2m+1}}
&= \frac{(-q^3; q^4)_\infty}{(q^2; q^4)_\infty}. \label{lem46}
\end{align}
\end{lemma}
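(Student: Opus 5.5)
The plan is to reduce each of the six sums to one of the classical summations \eqref{II10}, \eqref{II11}, \eqref{II8}, after rewriting it as a basic hypergeometric series in base $q^2$ (for \eqref{lem41}--\eqref{lem44}) or base $q^4$ (for \eqref{lem45}--\eqref{lem46}). The basic device is the splitting
\begin{align*}
(q;q)_{2m}=(q;q^2)_m(q^2;q^2)_m,\qquad (q;q)_{2m+1}=(1-q)\,(q^3;q^2)_m(q^2;q^2)_m .
\end{align*}
In base $q^4$ we use the analogous splitting of $(q^2;q^2)_{2m}$ and $(q^2;q^2)_{2m+1}$. After this each summand has the form
\begin{align*}
\frac{(\alpha;p)_m}{(p,\beta;p)_m}\,p^{\binom{m}{2}}z^m,\qquad p\in\{q^2,q^4\},
\end{align*}
possibly with one extra numerator or denominator factor. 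This is exactly the shape of \eqref{II10} or \eqref{II8}.

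For \eqref{lem41} I would apply \eqref{II10} with $q\to q^2$, $a=-1$, $b=q$. Then the upper parameter $q^2/a=-q^2$ cancels the lower parameter $-q^2$. The factor $(-1)^mq^{m^2-m}(-b)^m$ becomes $q^{m^2}$, which reproduces the left side. The right side is
\begin{align*}
\frac{(-q,-q^3;q^4)_\infty}{(q;q^2)_\infty}=\frac{(-q;q^2)_\infty}{(q;q^2)_\infty}.
\end{align*}
In the same way, \eqref{lem44} should follow from \eqref{II10} in base $q^2$ with $a=-q^2$, $b=q^3$, after pulling out $1/(1-q)$. The remaining sums have an upper factor that does not cancel against $(-q^2;q^2)_m$; for instance \eqref{lem43} has $(-q;q^2)_m$. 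For these I would first try the limiting $q$-Gauss sum \eqref{II8}: in base $q^2$ take $a=-q$ and $c=q$ (or $c=q^3$ for the odd-length denominators). Here $c/a=-1$, so the right side becomes $(-1;q^2)_\infty/(q;q^2)_\infty=2(-q^2;q^2)_\infty/(q;q^2)_\infty$. This matches \eqref{lem43} once the signs $(-1)^m$ and the power $q^{m^2-m}$ are checked. Identity \eqref{lem42} should come from \eqref{II11} in base $q^2$, or alternatively from \eqref{II8}, with $(-q;q^2)_m$ as the single upper parameter. Identities \eqref{lem45} and \eqref{lem46} are the same computations in base $q^4$: take upper parameter $-q$ or $-q^3$, lower parameter $q^2$ or $q^6$, and apply \eqref{II8}. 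In these cases the right side $(c/a;q^4)_\infty/(c;q^4)_\infty$ gives directly $(-q;q^4)_\infty/(q^2;q^4)_\infty$ and $(-q^3;q^4)_\infty/(q^2;q^4)_\infty$, after absorbing $1/(1-q^2)$ in the odd case.

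The main obstacle is the bookkeeping of the quadratic exponent. Each of \eqref{II10}, \eqref{II11}, \eqref{II8} carries a built-in factor $(-1)^m p^{\binom m2}$ (or none, for \eqref{II11}). The linear part of the given exponent must be matched exactly by $z^m$, including the sign, so that the series is genuinely of the classical form rather than off by a power $q^{m}$. If a direct match with \eqref{II8} fails for one of the sums, I would instead expand $(-a;p)_m$ by the finite $q$-binomial theorem \eqref{finite-q-bino}. I would then interchange the order of summation and sum the inner series by Lemma~\ref{lem:0} or the Lebesgue identity (Lemma~\ref{lem:3}), which produces the same product forms.

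Finally, I would confirm each identity by comparing the first several coefficients of the $q$-expansions. This catches any mistake in the parameter choices.
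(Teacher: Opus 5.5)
Your overall strategy is the paper's: rewrite each sum in base $q^2$ or $q^4$ and evaluate it by \eqref{II10}, \eqref{II11} or \eqref{II8}. For \eqref{lem41}, \eqref{lem43}, \eqref{lem45}, \eqref{lem46} your parameter choices are exactly the paper's.

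\textbf{The step that fails: \eqref{lem44}.} In base $q^2$, \eqref{II10} has upper parameters $a, q^2/a$ and the fixed lower parameter $-q^2$ (the image of $-q$). To remove the unwanted denominator $(-q^2;q^2)_m$ you must take $a\in\{-q^2,-1\}$. In either case the surviving numerator is $(-1;q^2)_m$, never $(-q^2;q^2)_m$, so \eqref{II10} can only produce sums of the type \eqref{lem41}. Concretely, your choice $a=-q^2$, $b=q^3$ yields
\[
\sum_{m\ge 0}\frac{(-1;q^2)_m\,q^{m^2+2m}}{(q^2;q^2)_m(q^3;q^2)_m}=\frac{(-q^3;q^2)_\infty}{(q^3;q^2)_\infty}.
\]
This is a true identity but a different one: $(1-q)$ times the left side of \eqref{lem44} begins $1+q+\cdots$, while this begins $1+2q^3+\cdots$.

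The fix is to use \eqref{II8} in base $q^2$ with $a=-q^2$, $c=q^3$. Then $c/a=-q$, so $(-1)^mq^{m^2-m}(c/a)^m=q^{m^2}$. The summand becomes $(-q^2;q^2)_mq^{m^2}/\bigl((q^2;q^2)_m(q^3;q^2)_m\bigr)$ and the right side is $(-q;q^2)_\infty/(q^3;q^2)_\infty$. Dividing by $1-q$ gives \eqref{lem44}. This is the paper's route; the paper prints $c=-q^3$, but its displayed evaluation is the one for $c=q^3$.

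\textbf{Two smaller remarks.}

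First, your \eqref{II8} alternative for \eqref{lem42} ($a=-q$, $c=q^3$, so $c/a=-q^2$) is valid and differs from the paper. The paper uses \eqref{II11} with $a^2=-q$, $b^2=-q^3$, so that the lower pair $\pm q^3$ cancels $(-q^3;q^2)_m$. Your route gives $(-q^2;q^2)_\infty/(q;q^2)_\infty$. This matches $(-q;q^2)_\infty/(q^2;q^4)_\infty^2$ only after Euler's identities $(-q^2;q^2)_\infty(q^2;q^4)_\infty=1$ and $(q;q^2)_\infty(-q;q^2)_\infty=(q^2;q^4)_\infty$. Likewise \eqref{lem41} follows from \eqref{II8} with $a=-1$, $c=q$. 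So a single summation, \eqref{II8}, handles all six identities uniformly, whereas the paper mixes in \eqref{II10} and \eqref{II11}.

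Second, your parenthetical that \eqref{II11} carries no factor $(-1)^mp^{\binom m2}$ is wrong. It is a ${}_2\phi_2$, so it does carry that factor. With argument $-q^2$ in base $q^2$ it produces exactly the $q^{m^2+m}$ needed in \eqref{lem42}.
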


\begin{proof}
For \eqref{lem41}, with $(q, a, b)\ri (q^2, -1, q)$, the cancellation in \eqref{II10} gives 
\begin{align*}
\sum_{m \geq 0} \frac{q^{m^2} (-1; q^2)_m}{(q; q)_{2m}}
= {}_2\phi_2 \left[
\begin{matrix}
-1, -q^2 \\
-q^2, q
\end{matrix}
; q^2, -q
\right]
= \frac{(-q, -q^3; q^4)_\infty}{(q; q^2)_\infty}
= \frac{(-q; q^2)_\infty}{(q; q^2)_\infty}.
\end{align*}
For \eqref{lem42}, with $(q, a^2, b^2)\ri (q^2, -q, -q^3)$, the cancellation in \eqref{II11} gives
\begin{align*}
\sum_{m \geq 0} \frac{q^{m(m+1)}(-q; q^2)_m}{(q; q)_{2m+1}}
= \frac{1}{1-q} \, {}_2\phi_2 \left[
\begin{matrix}
-q, -q^3 \\
q^3, -q^3
\end{matrix}
; q^2, -q^2
\right]
= \frac{1}{1-q} \frac{(-q^3, -q^5; q^4)_\infty}{(q^2, q^6; q^4)_\infty}
= \frac{(-q; q^2)_\infty}{(q^2; q^4)_\infty^2}.
\end{align*}
For \eqref{lem43}, taking $(q, a, c)\ri (q^2, -q, q)$ in \eqref{II8} gives
\begin{align*}
\sum_{m \geq 0} \frac{q^{m^2 - m}(-q; q^2)_m}{(q^2; q^2)_m (q; q^2)_m}
= \frac{(-1; q^2)_\infty}{(q; q^2)_\infty}
= 2 \frac{(-q^2; q^2)_\infty}{(q; q^2)_\infty}.
\end{align*}
For \eqref{lem44}, taking $(q, a, c)\ri (q^2, -q^2, -q^3)$ in \eqref{II8} gives
\begin{align*}
\sum_{m \geq 0} \frac{q^{m^2} (-q^2; q^2)_m}{(q; q)_{2m+1}}
= \frac{1}{1-q} \frac{(-q; q^2)_\infty}{(q^3; q^2)_\infty}
= \frac{(-q; q^2)_\infty}{(q; q^2)_\infty}.
\end{align*}
For \eqref{lem45}, taking $(q, a, c)\ri (q^4, -q, q^2)$ in \eqref{II8} gives
\begin{align*}
\sum_{m \geq 0} \frac{q^{2m^2-m}(-q; q^4)_m}{(q^4; q^4)_m (q^2; q^4)_m}
= \frac{(-q; q^4)_\infty}{(q^2; q^4)_\infty}.
\end{align*}
For \eqref{lem46}, taking $(q, a, c)\ri (q^4, -q^3, q^6)$ in \eqref{II8} gives
 \begin{align*}
\sum_{m \geq 0} \frac{q^{2m^2+m}(-q^3; q^4)_m}{(q^2; q^2)_{2m+1}}
= \frac{1}{1-q^2} \frac{(-q^3; q^4)_\infty}{(q^6; q^4)_\infty}
= \frac{(-q^3; q^4)_\infty}{(q^2; q^4)_\infty}.
\end{align*}
This completes the proof.
\end{proof}

\section{Proof of Theorem \ref{thm:1}}\label{sec:11}

In this section, we provide analytic proofs of the three identities in Theorem \ref{thm:1} individually. Before proceeding with the proofs, we introduce some notation. Let
\begin{align*}
P:=(-q; q)_{\infty},\quad H:=(-q; q^2)_{\infty},\quad R:=(-q^2; q^2)_{\infty}.
\end{align*}
Then $P=HR=J_2/J_1$, $R=J_4/J_2$, and
\begin{align*}
(q; q^2)_{\infty}=\frac{J_1}{J_2},\quad (q^2; q^4)_{\infty}=\frac{J_2}{J_4}=R^{-1}.
\end{align*}
The Jacobi triple product \eqref{jacobi} gives 
\begin{align}
\theta_0=\theta_0(q):=\sum_{r\in \bZ}q^{r^2}=J_2H^2,\quad \theta_1=\theta_1(q):=\sum_{r\in \bZ}q^{r^2+r}=2J_2R^2.\label{ja}
\end{align}

\begin{proof}[Proof of \eqref{thm13}]
For \eqref{thm13}, using the substitution $(n_1, n_2, n_3, n_4)=(k, j, l, i)$ in \eqref{rankfour}, we have
\begin{align*}
\text{Left side of }\eqref{thm13}&=\chi_4(q, q^{-1}, 1, q^{\frac{1}{2}}; q)\\
&=\frac{1}{J_1}
\sum_{r \in \mathbb{Z}} \sum_{a, b, d \geq 0}
\frac{q^{r^2 + ar + a^2 + ab + b^2 + d^2 / 2 - (a+b)d+r+d/2}}{(q; q)_a (q; q)_b (q; q)_d},
\end{align*}
where the second equality follows from Lemma \ref{lem:2}. Summing over $d\geq 0$ by \eqref{II2} and using
\begin{align*}
(-q^{1-a-b}; q)_{\infty}=Pq^{-\binom{a+b}{2}}(-1; q)_{a+b}
\end{align*}
yields
\begin{align}
\text{Left side of }\eqref{thm13}&=\frac{1}{J_1}\sum_{r\in \bZ}\sum_{a, b\geq 0}\frac{q^{r^2+ar+a^2+ab+b^2+r}}{(q; q)_a(q; q)_b}\sum_{d\geq 0}\frac{q^{d^2/2+d/2-(a+b)d}}{(q; q)_d}\nonumber\\
&=\frac{1}{J_1}\sum_{r\in \bZ}\sum_{a, b\geq 0}\frac{q^{r^2+ar+a^2+ab+b^2+r}}{(q; q)_a(q; q)_b}(-q^{1-a-b}; q)_{\infty}\quad\text{(by \eqref{II2})}\nonumber\\
&=\frac{P}{J_1}\sum_{r\in \bZ}\sum_{a, b\geq 0}\frac{q^{r^2+ar+a^2+ab+b^2+r-\binom{a+b}{2}}(-1; q)_{a+b}}{(q; q)_a(q; q)_b}\nonumber\\
&=\frac{P}{J_1}U_3(q),\label{13_1}
\end{align}
where
\begin{align*}
U_3(q):=\sum_{r\in \bZ}\sum_{a, b\geq 0}\frac{(-1; q)_{a+b}q^{r^2+(a+1)r+\binom{a+1}{2}+\binom{b+1}{2}}}{(q; q)_a(q; q)_b}.
\end{align*}
Next we split $a$ into its even and odd parts. For $a=2m$ we have
\begin{align*}
r^2+(2m+1)r=(r+m)^2+(r+m)-m(m+1),
\end{align*}
whereas for $a=2m+1$ we have
\begin{align*}
r^2+(2m+2)r=(r+m+1)^2-(m+1)^2.
\end{align*}
Hence, after these operations, summing over $b\geq 0$ gives
\begin{align}
U_3(q)=&\sum_{r\in \bZ}\sum_{m\geq 0}\frac{(-1; q)_{2m}q^{(r+m)^2+(r+m)+m^2}}{(q; q)_{2m}}\sum_{b\geq 0}\frac{(-q^{2m}; q)_bq^{\binom{b+1}{2}}}{(q; q)_b}\nonumber\\
&+\sum_{r\in \bZ}\sum_{m\geq 0}\frac{(-1; q)_{2m+1}q^{(r+m+1)^2+m^2+m}}{(q; q)_{2m+1}}\sum_{b\geq 0}\frac{(-q^{2m+1}; q)_bq^{\binom{b+1}{2}}}{(q; q)_b}\nonumber\\
=&P\sum_{r\in \bZ}\sum_{m\geq 0}\frac{(-1; q)_{2m}q^{(r+m)^2+(r+m)+m^2}}{(q; q)_{2m}}(-q^{2m+1}; q^2)_{\infty}\nonumber\\
&+P\sum_{r\in \bZ}\sum_{m\geq 0}\frac{(-1; q)_{2m+1}q^{(r+m+1)^2+m^2+m}}{(q; q)_{2m+1}}(-q^{2m+2}; q^2)_{\infty}\quad\text{(by Lemma \ref{lem:3})}\nonumber\\
=&PH\sum_{m\geq 0}\frac{(-1; q^2)_mq^{m^2}}{(q; q)_{2m}}\sum_{r\in\bZ}q^{(r+m)^2+(r+m)}+2PR\sum_{m\geq 0}\frac{(-q; q^2)_mq^{m^2+m}}{(q; q)_{2m+1}}\sum_{r\in \bZ}q^{(r+m+1)^2}\nonumber\\
=&\theta_1PH\sum_{m\geq 0}\frac{q^{m^2}(-1; q^2)_m}{(q; q)_{2m}}+2\theta_0PR\sum_{m\geq 0}\frac{q^{m(m+1)}(-q; q^2)_m}{(q; q)_{2m+1}}.\quad\text{(by \eqref{ja})}\label{u3}
\end{align}
By \eqref{lem41}, \eqref{lem42} and \eqref{ja}, we know that each part in \eqref{u3} equals $2J_1P^5$. Thus we have $U_3(q)=4J_1P^5$, and by \eqref{13_1}
\begin{align*}
\text{Left side of }\eqref{thm13}=4P^6=4\frac{J_2^6}{J_1^6}.
\end{align*}
This completes the proof.
\end{proof}
The proofs of the remaining two identities in Theorem \ref{thm:1} are similar to the proof of \eqref{thm13}, and we will omit some details and leave the remaining details to the interested reader.

\begin{proof}[Proof of \eqref{thm11}]
For \eqref{thm11}, using the substitution $(n_1, n_2, n_3, n_4)=(k, j, l, i)$ in \eqref{rankfour}, we have
\begin{align*}
\text{Left side of }\eqref{thm11}&=\chi_4(q^2, q^{-2}, q, q^{-\frac{1}{2}}; q)\\
&=\frac{1}{J_1}
\sum_{r \in \mathbb{Z}} \sum_{a, b, d \geq 0}
\frac{q^{r^2 + ar + a^2 + ab + b^2 + d^2 / 2 - (a+b)d+2r+a+b-d/2}}{(q; q)_a (q; q)_b (q; q)_d},
\end{align*}
where the second equality follows from Lemma \ref{lem:2}. Summing over $d\geq 0$ by \eqref{II2} and using
\begin{align*}
(-q^{-a-b}; q)_{\infty}=2Pq^{-\binom{a+b+1}{2}}(-q; q)_{a+b}
\end{align*}
yields
\begin{align}
\text{Left side of \eqref{thm11}}=\frac{2P}{J_1}U_1(q),\label{11_2}
\end{align}
where
\begin{align*}
U_1(q):=\sum_{r\in \bZ}\sum_{a, b\geq 0}\frac{(-q; q)_{a+b}q^{r^2+(a+2)r+\binom{a+1}{2}+\binom{b+1}{2}}}{(q; q)_a(q; q)_b}.
\end{align*}
For $a=2m$ and $a=2m+1$, respectively, we have
\begin{align*}
r^2+(2m+2)r&=(r+m+1)^2-(m+1)^2,\\
r^2+(2m+3)r&=(r+m+1)^2+(r+m+1)-(m+1)(m+2).
\end{align*}
Using $(-q; q)_{2m}=(-q; q^2)_{m}(-q^2; q^2)_m$ and $(-q; q)_{2m+1}=(-q; q^2)_{m+1}(-q^2; q^2)_m$ and summing over $b\geq 0$ by Lemma \ref{lem:3}, the parity split yields
\begin{align}
U_1(q)=q^{-1}\theta_0PR\sum_{m\geq 0}\frac{q^{m^2-m}(-q; q^2)_m}{(q; q)_{2m}}+q^{-1}\theta_1PH\sum_{m\geq 0}\frac{q^{m^2}(-q^2; q^2)_m}{(q; q)_{2m+1}}.\label{11_1}
\end{align}
By \eqref{lem43}, \eqref{lem44} and \eqref{ja}, we know that each part in \eqref{11_1} equals $2q^{-1}J_1P^5$. Therefore, we have $U_1(q)=4q^{-1}J_1P^5$, and \eqref{11_2} gives 
\begin{align*}
\text{Left side of \eqref{thm11}}=8q^{-1}P^6=8q^{-1}\frac{J_2^6}{J_1^6}.
\end{align*}
This completes the proof.
\end{proof}

\begin{proof}[Proof of \eqref{thm12}]
For \eqref{thm12}, using the substitution $(n_1, n_2, n_3, n_4)=(k, j, l, i)$ with $q\ri q^2$ in \eqref{rankfour}, we have
\begin{align*}
\text{Left side of }\eqref{thm12}&=\chi_4(q^3, q^{-3}, q, 1; q^2)\\
&=\frac{1}{J_2}
\sum_{r \in \mathbb{Z}} \sum_{a, b, d \geq 0}
\frac{q^{2r^2 + 2ar + 2a^2 + 2ab + 2b^2 + d^2 - 2(a+b)d+3r+a+b}}{(q^2; q^2)_a (q^2; q^2)_b (q^2; q^2)_d},
\end{align*}
where the second equality follows from Lemma \ref{lem:2}. Summing over $d\geq 0$ by \eqref{II2} and using
\begin{align*}
(-q^{1-2a-2b}; q)_{\infty}=Hq^{-(a+b)^2}(-q; q^2)_{a+b}
\end{align*}
yields
\begin{align}
\text{Left side of \eqref{thm12}}=\frac{H}{J_2}U_2(q),\label{12_1}
\end{align}
where
\begin{align*}
U_2(q):=\sum_{r\in \bZ}\sum_{a, b\geq 0}\frac{(-q; q^2)_{a+b}q^{2r^2+(2a+3)r+a(a+1)+b(b+1)}}{(q^2; q^2)_a(q^2; q^2)_b}.
\end{align*}
Let 
\begin{align}
\theta_2:=\sum_{r\in \bZ}q^{2r^2+r}=J_4H,\quad\theta_3:=\sum_{r\in\bZ}q^{2r^2+3r}=q^{-1}\theta_2.\label{12_3}
\end{align}
For $a=2m$ and $a=2m+1$, respectively, we have
\begin{align*}
2r^2+(4m+3)r&=2(r+m)^2+3(r+m)-2m(m+3),\\
2r^2+(4m+5)r&=2(r+m+1)^2+(r+m+1)-(m+1)(2m+3).
\end{align*}
After splitting $(-q; q^2)_{2m}$ and $(-q; q^2)_{2m+1}$ into residue classes modulo $4$ and summing over $b\geq 0$ by Lemma \ref{lem:3}, the parity decomposition yields
\begin{align}
U_2(q)=q^{-1}\theta_2R(-q^3; q^4)_{\infty}\sum_{m\geq 0}\frac{q^{2m^2-m}(-q; q^4)_m}{(q^2; q^2)_m}+q^{-1}\theta_2R(-q; q^4)_{\infty}\sum_{m\geq 0}\frac{q^{2m^2+m}(-q^3; q^4)_{m}}{(q^2; q^2)_{2m+1}}.\label{12_2}
\end{align}
By \eqref{lem45}, \eqref{lem46} and \eqref{12_3}, we know that each part in \eqref{12_2} equals $q^{-1}J_4P^2$. Hence, we have $U_2(q)=2q^{-1}J_4P^2$, and \eqref{12_1} gives 
\begin{align*}
\text{Left side of \eqref{thm12}}=2q^{-1}HJ_4P^2/J_2=2q^{-1}P^3=2q^{-1}\frac{J_2^3}{J_1^3},
\end{align*}
since $HJ_4/J_2=HR=P$. This completes the proof.
\end{proof}

\section{Proof of Theorem \ref{thm:2}}\label{sec:12}

In this section, we provide analytic proofs of the two identities in Theorem \ref{thm:2}. Prior to presenting this proof, we require three further tools.

\begin{lemma}[{cf.~\cite[Equations (2.6.12) and (2.6.14)]{LSZ08}}]\label{lem:sla}
We have
\begin{align}
\sum_{s\geq 0}\frac{q^{s(s+1)}(-q^2; q^2)_s}{(q; q)_{2s+1}}&=\frac{f(q, q^5)}{f(-q^2, -q^2)},\label{sla1}\\
\sum_{s\geq 0}\frac{q^{s^2}(-q; q^2)_{s}}{(q; q)_{2s}}&=\frac{f(q^2, q^4)}{f(-q, -q^3)}.\label{sla2}
\end{align}
\end{lemma}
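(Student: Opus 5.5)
These are Slater-list identities (cited from LSZ08), and I would prove them the same way Lemma \ref{lem:4} was proved: write each left side as a ${}_2\phi_2$ in base $q^2$ that \eqref{II10} or \eqref{II11} sums to a product, then match that product with the theta quotient on the right via \eqref{jacobi}.

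\textbf{Identity \eqref{sla2}.} Write $(q;q)_{2s}=(q;q^2)_s(q^2;q^2)_s$, so the left side is
\[
\sum_{s\ge0}\frac{(-q;q^2)_s\,q^{s^2}}{(q^2;q^2)_s(q;q^2)_s}.
\]
Take base $q^2$ in \eqref{II10} with $a=-q$, so $q^2/a=-q$. Then $(a,q^2/a;q^2)_s=(-q;q^2)_s^2$. To cancel one factor $(-q;q^2)_s$ against the lower parameter $-q^2$, I would instead read $(-q;q^2)_s/(q;q^2)_s$ as $(-q;q^2)_s^2/(q^2;q^4)_s$, using $(q;q^2)_s(-q;q^2)_s=(q^2;q^4)_s$. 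The main obstacle is that this produces base $q^4$ denominators, so it does not fit \eqref{II10} directly. The first thing I would try is \eqref{II11} in base $q^2$ with $a^2=-q$ and $b^2$ chosen so that the lower parameters $\pm abq$ absorb the denominators. If neither classical formula fits, I would fall back on Bailey pairs relative to $a=1$ and $a=q$ (Slater's A-tables), which is exactly how LSZ08 derive (2.6.12) and (2.6.14).

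\textbf{Identity \eqref{sla1}.} Similarly, $(q;q)_{2s+1}=(q;q^2)_{s+1}(q^2;q^2)_s$, so after pulling out $1/(1-q)$ the summand is $(-q^2;q^2)_s/\bigl((q^2;q^2)_s(q^3;q^2)_s\bigr)\,q^{s^2+s}$. Taking base $q^2$ in \eqref{II8} with $a=-q^2$ and $c=q^3$ gives $c/a=-q$, hence the factor $(-1)^sq^{s^2-s}(-q)^s=q^{s^2}$. This is one power $q^{s}$ short of the required $q^{s^2+s}$, so the extra $q^s$ has to come from a ${}_2\phi_2$ evaluation instead. I would try \eqref{II10} in base $q^2$ with $b=q^3$ and $a,q^2/a$ chosen so that one of them is $-q^2$ (cancelling) and the other yields $(-q^2)^s$.

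\textbf{Product simplification.} Finally I would convert the resulting products using \eqref{jacobi}:
\[
f(q,q^5)=(-q,-q^5,q^6;q^6)_\infty,\quad f(-q^2,-q^2)=\frac{J_2^2}{J_4},\quad f(q^2,q^4)=(-q^2,-q^4,q^6;q^6)_\infty,\quad f(-q,-q^3)=J_1.
\]
Since the right sides involve modulus $6$, the summation used cannot be a pure ${}_2\phi_2$ evaluation in base $q^2$, whose products have modulus $4$. That is the main obstacle, and I expect the real route to be a Bailey pair argument (or a quintuple-product step) rather than the classical sums above.
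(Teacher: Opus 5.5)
\textbf{Genuine gap.} The paper gives no proof of this lemma; it quotes Slater's identities from \cite{LSZ08}, so a bare citation would match it. Read as a proof, though, your proposal does not establish anything. The only derivation you actually set up, via \eqref{II8}, \eqref{II10} and \eqref{II11} in base $q^2$, does not go through. None of the specializations you try matches the summands, as you found. Moreover, with parameters that are signed powers of $q$, these formulas only produce products whose moduli are powers of $2$, whereas the right sides involve $J_3$, $J_6$, $J_{12}$. Your fallback, ``Bailey pairs relative to $a=1$ and $a=q$'', names no pair and carries out no step.

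Your product simplification also contains an error: $f(-q,-q^3)=(q,q^3,q^4;q^4)_\infty=J_1J_4/J_2$, not $J_1$. With the correct value, the right sides of \eqref{sla1} and \eqref{sla2} are $f(-q^3,-q^9)/J_1=J_3J_{12}/(J_1J_6)$ and $f(-q^6,-q^6)/J_1=J_6^2/(J_1J_{12})$. It is this form that tells you which Bailey pairs to use.

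\textbf{The missing step.} For a Bailey pair $(\alpha_n,\beta_n)$ relative to $a$, letting $\rho_1,\rho_2\to\infty$ in Bailey's lemma gives $\sum_n a^nq^{n^2}\beta_n=(aq;q)_\infty^{-1}\sum_n a^nq^{n^2}\alpha_n$. Both pairs below come from the finite $q$-binomial theorem $\sum_j\begin{bmatrix}N\\j\end{bmatrix}_q q^{\binom{j}{2}}z^j=(-z;q)_N$ at imaginary $z$.

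For \eqref{sla2}, take $a=1$, $\beta_n=(-q;q^2)_n/(q;q)_{2n}$, $\alpha_0=1$, $\alpha_{2k}=2(-1)^kq^{2k^2}$ for $k\geq 1$, and $\alpha_{2k+1}=0$. The defining relation is $\sum_{k\in\mathbb{Z}}(-1)^kq^{2k^2}\begin{bmatrix}2n\\n+2k\end{bmatrix}_q=(-q;q^2)_n$. To get it, take the $q$-binomial theorem with $N=2n$ and $z=\epsilon iq^{1/2-n}$, multiply by $(\epsilon i)^{-n}q^{n^2/2}$, and average over $\epsilon=\pm1$. Bailey's lemma then gives the left side of \eqref{sla2} as $J_1^{-1}\sum_{k\in\mathbb{Z}}(-1)^kq^{6k^2}=f(-q^6,-q^6)/J_1$.

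For \eqref{sla1}, take $a=q$, $\beta_n=(1-q)(-q^2;q^2)_n/(q;q)_{2n+1}$ and $\alpha_n=(-q)^{n(n+1)/2}$. The defining relation is $\sum_{r=0}^n(-q)^{\binom{r+1}{2}}\begin{bmatrix}2n+1\\n-r\end{bmatrix}_q=(-q^2;q^2)_n$. It follows in the same way from $N=2n+1$ and $z=\pm iq^{-n}$, using $(-1)^{\binom{m}{2}}=\frac12\bigl((1-i)i^m+(1+i)(-i)^m\bigr)$ and the symmetry $r\mapsto -1-r$. Bailey's lemma then gives the left side of \eqref{sla1} as $J_1^{-1}\sum_{n\geq 0}(-1)^{\binom{n+1}{2}}q^{3\binom{n+1}{2}}=f(-q^3,-q^9)/J_1$.
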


\begin{lemma}[{cf. \cite[Equation (1.13) in Theorem~1.3]{SW25}}]\label{lem:SW}
We have
\begin{align}
\sum_{a, b, d\geq 0}\frac{t^{4a^2+4ab+3b^2-2bd+d^2-2b+2d}}{(t^4; t^4)_a(t^4; t^4)_b(t^4; t^4)_d}=\frac{(t^{12}; t^{12})_{\infty}}{(t^8; t^8)_{\infty}(t, t^{11}; t^{12})_{\infty}(t^5, t^7; t^{12})_{\infty}}.\label{SW}
\end{align}
\end{lemma}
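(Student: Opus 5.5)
The natural approach is to bring everything to the base $q=t^4$ and then reduce the rank, in the same way Lemma \ref{lem:2} was used in Section \ref{sec:11}. The summand $t^{4a^2+4ab}/(t^4;t^4)_a$ is already of the form $q^{a^2+ab}/(q;q)_a$. The variables $b$ and $d$ only have integral $q$-exponents after a parity split, since
\begin{align*}
3b^2-2bd+d^2-2b+2d=2b^2+(d-b)^2+2(d-b).
\end{align*}
This suggests the substitution $n=d-b$. The exponent then becomes $4a^2+4ab+2b^2+n^2+2n$, and one splits according to the parity of $b$ and of $n$. After this split every term has a $q$-exponent quadratic in the new variables, with denominators $(q;q)_a(q;q)_b(q;q)_{b+n}$.

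First, sum over $a$ and $b$ in the spirit of Lemma \ref{lem:1}. Combine $a$ and $b$ through $c=a+b$ and the $q$-binomial coefficient, exactly as in the proof of Lemma \ref{lem:2}, but read in reverse. This should turn the double sum over $a,b$ with fixed $n$ into a single sum over $c$ of the form
\begin{align*}
\sum_{c\ge0}\frac{q^{Q(c,n)}}{(q;q)_c\,(q;q)_{c+n}}
\end{align*}
times a factor depending on $n$. Then apply Lemma \ref{lem:1}, viewing $n$ as the shift $r$; this produces the factor $1/J_1=1/(t^4;t^4)_\infty$ together with a finite $q$-binomial sum. If that route gets tangled, the fallback is to sum over $d$ with fixed $b$ using \eqref{II2} (after the parity split of $d$), exactly as was done for $d$ in Section \ref{sec:11}. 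The resulting factor $(-q^{\cdots-b};q)_\infty$ is then rewritten as a finite product times $(-q;q)_\infty$ or $(-q;q^2)_\infty$.

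Second, after the parity split, the remaining sums should be single-fold sums with numerators $(-q;q^2)_s$ or $(-q^2;q^2)_s$ and denominators $(q;q)_{2s}$ or $(q;q)_{2s+1}$, multiplied by a theta series in the bilateral variable. These have exactly the shape of \eqref{sla1} and \eqref{sla2}, and of Lemma \ref{lem:4}. Evaluate them by these identities, and evaluate the theta series by the Jacobi triple product \eqref{jacobi}. Finally, combine the two parity pieces and convert back to $t$. The target is
\begin{align*}
\frac{(t^{12};t^{12})_\infty}{(t^8;t^8)_\infty (t,t^{11},t^5,t^7;t^{12})_\infty},
\end{align*}
and reaching it requires writing $f(t^4,t^{20})$, $f(t^8,t^{16})$ and similar thetas as products and checking the identity of two sums of products.

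The main obstacle is this last step. The two parity contributions come out as separate infinite products, and their sum must collapse to a single product. This typically needs a quintuple product or a theta-function addition formula, for instance splitting $f(t,t^{11})f(t^5,t^7)$ into even and odd parts in $t$. I would first try to recognize the right side as a combination $A(t^2)+tB(t^2)$ via the $2$-dissection of $1/(t,t^5,t^7,t^{11};t^{12})_\infty$, and match each piece to one parity class.
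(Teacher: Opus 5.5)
\textbf{There is a genuine gap: neither of your reduction mechanisms applies to this quadratic form.}

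Note first that the paper gives no proof of this lemma. It imports it from Shi--Wang and uses it as input in the proof of \eqref{thm2}. That reduction yields only the single combination $t^{-1}\bigl(f(t^4,t^4)Z_o(t)+tf(1,t^8)Z_e(t)\bigr)$ of \eqref{thm2-2}, in which both terms are even in $t$. So that identity alone cannot determine $Z_e$ and $Z_o$, and your argument has to stand on its own.

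With $q=t^4$ and $n=d-b$, the weight is $q^{a^2+ab+b^2/2}t^{n^2+2n}$.
\begin{enumerate}
\item[(i)] Lemma \ref{lem:1} needs the pair $(q;q)_k(q;q)_{k+r}$ to carry $q^{k(k+r-c)}$, i.e.\ a full $q^{k^2}$. Your pair $(q;q)_b(q;q)_{b+n}$ carries only $q^{b^2/2}$.
\item[(ii)] Reading the $c=a+b$ step of Lemma \ref{lem:2} backwards needs the form $q^{a^2+ab+b^2+ar}$ with no other $a$-dependence. Here $c=a+b$ gives $q^{(c^2+a^2)/2}\begin{bmatrix}c\\a\end{bmatrix}_q/\bigl((q;q)_c(q;q)_{c-a+n}\bigr)$. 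Its $a$-sum has the wrong quadratic coefficient, no $-ac$ term, and an extra $a$-dependent denominator, so it is not the right side of \eqref{lem1}.
\item[(iii)] No factor depending only on $n$ splits off either. Summing $a$ first produces $\sum_a q^{a^2+ab}/(q;q)_a$, which is a Rogers--Ramanujan function of $b$.
\item[(iv)] The fallback fails as well. \eqref{II2} needs $q^{\binom d2}z^d/(q;q)_d$, whereas $d$ carries $t^{d^2}=q^{d^2/4}$. After $d=2e$ you get $\sum_e (q^2)^{\binom e2}(q^{2-b})^e/\bigl((q^2;q^2)_e(q;q^2)_e\bigr)$. This is not an instance of \eqref{II2} or of any summation in Section \ref{sec:pre}. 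In Section \ref{sec:11} the variable $d$ carried $q^{d^2/2}/(q;q)_d$, which is why summing it worked there.
\end{enumerate}
Consequently, the single sums of the shapes \eqref{sla1}, \eqref{sla2} or Lemma \ref{lem:4} that your second step expects never actually arise.

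You have also misplaced the difficulty. Since $n^2+2n\equiv n\pmod 2$, the $t$-parity of a term is the parity of $n=d-b$. Let $Z_e(t)$ and $Z_o(t)$ be the parts of the left side with $n$ even and odd, as in the proof of \eqref{thm2}. By \eqref{thm2-1} and $f(t,t^5)=f(t^8,t^{16})+tf(t^4,t^{20})$, the lemma is equivalent to the two separate evaluations $Z_e(t)=\frac{(-t^2;t^4)_\infty}{(t^4;t^4)_\infty}f(t^8,t^{16})$ and $Z_o(t)=t\frac{(-t^2;t^4)_\infty}{(t^4;t^4)_\infty}f(t^4,t^{20})$. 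Recombining these into the stated product is routine: it needs only the Jacobi triple product, no quintuple product or addition formula. What is missing is a method for evaluating these two rank-three pieces individually. That requires some input beyond Lemmas \ref{lem:1}--\ref{lem:4}, which is exactly what the citation to Shi--Wang supplies, and your sketch does not provide one.
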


\begin{lemma}[{cf. \cite[Proposition 2.2 (ii)]{BBG94}}]\label{lem:BBG}
Let 
\begin{align}
c(q):=\sum_{n, m\in \bZ}q^{(n+\frac{1}{3})^2+(n+\frac{1}{3})(m+\frac{1}{3})+(m+\frac{1}{2})^2},
\end{align}
then we have
\begin{align}
c(q)=3q^{\frac{1}{3}}\frac{(q^3; q^3)_{\infty}^3}{(q; q)_{\infty}}.\label{cubic}
\end{align}
\end{lemma}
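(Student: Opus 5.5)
The plan is to prove \eqref{cubic} directly by reducing the double theta series to products of one-variable theta functions, which the Jacobi triple product \eqref{jacobi} then evaluates. I read the exponent as the symmetric form $(n+\frac13)^2+(n+\frac13)(m+\frac13)+(m+\frac13)^2$ of Borwein--Borwein--Garvan; the $(m+\frac12)^2$ in the display appears to be a misprint. With $x=n+\frac13$ and $y=m+\frac13$, complete the square in $y$:
\begin{align*}
x^2+xy+y^2=\Bigl(y+\frac{x}{2}\Bigr)^2+\frac34x^2 .
\end{align*}
Then split the sum according to the parity of $n$, so that the shift $y+\frac{x}{2}$ becomes a half-integer or an integer translate of $m$.

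For $n=2k$ we get $y+\frac{x}{2}=m+k+\frac12$ and $\frac34x^2=3k^2+k+\frac1{12}$. For $n=2k+1$ we get $y+\frac{x}{2}=m+k+1$ and $\frac34x^2=3k^2+4k+\frac43$. Summing freely over $m\in\bZ$ in each class (the shift by $k$ is harmless), this gives
\begin{align*}
c(q)=q^{\frac13}\Bigl(\sum_{k\in\bZ}q^{3k^2+k}\Bigr)\Bigl(\sum_{m\in\bZ}q^{m^2+m}\Bigr)+q^{\frac43}\Bigl(\sum_{k\in\bZ}q^{3k^2+4k}\Bigr)\Bigl(\sum_{m\in\bZ}q^{m^2}\Bigr).
\end{align*}
By \eqref{ja} and \eqref{jacobi}, the four factors are $f(q^2,q^4)$, $\theta_1=2J_2R^2=2J_4^2/J_2$, $q^{-1}f(q,q^5)$ and $\theta_0=J_2H^2=J_2^5/(J_1^2J_4^2)$. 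The lemma is therefore equivalent to the single theta-product identity
\begin{align*}
2\frac{J_4^2}{J_2}f(q^2,q^4)+\frac{J_2^5}{J_1^2J_4^2}f(q,q^5)=3\frac{J_3^3}{J_1},
\end{align*}
where $f(q^2,q^4)$ and $f(q,q^5)$ are written as $J$-quotients via \eqref{jacobi}, namely
\begin{align*}
f(q^2,q^4)=\frac{J_2J_3^2J_{12}}{J_1J_4J_6},\qquad f(q,q^5)=\frac{J_2^2J_3J_{12}}{J_1J_4J_6}.
\end{align*}

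The main obstacle is this last identity, a two-term theta-function relation of level $12$. My first attempt would be to divide through by $J_3^3/J_1$ and recognize both sides as weight-one modular forms on $\Gamma_0(12)$ with a common character; comparing $q$-expansions up to the Sturm bound (only a handful of coefficients) then finishes the proof.

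Failing that, I would derive it from known two-dissection and three-dissection formulas. One option is the classical decomposition $a(q)=a(q^4)+6q\,\psi(q^2)\psi(q^6)$ of the Borweins' cubic theta function. Another is to expand $J_3^3/J_1$ by Jacobi's cube identity and dissect it according to residue classes. Either route should match the two terms above after rewriting everything in $J_1,J_2,J_3,J_4,J_6,J_{12}$.
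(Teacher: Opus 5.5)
You are right that the exponent must read $(m+\frac13)^2$; with $(m+\frac12)^2$ the series is not $q^{1/3}$ times a power series in $q$. Your parity split is also computed correctly and gives $c(q)=q^{1/3}\bigl(\theta_1f(q^2,q^4)+\theta_0f(q,q^5)\bigr)$. But this is exactly the paper's decomposition \eqref{2_31} with $t^4=q$, and the paper runs it in the opposite direction. In the proof of \eqref{thm21}, the relation $f(1,q^2)f(q^2,q^4)+f(q,q)f(q,q^5)=3J_3^3/J_1$ is \emph{deduced from} \eqref{cubic}. The paper does not prove \eqref{cubic}; it quotes it from \cite{BBG94}. So your reduction is an equivalent reformulation, not progress. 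The whole content of the lemma is that this sum of two theta products collapses to a single product, and that is the step you label ``the main obstacle'' and leave unproved. As written, the proposal does not establish the lemma.

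The proposed finishing routes also have concrete problems.

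First, your product for $f(q^2,q^4)$ is wrong. By \eqref{jacobi}, $f(q^2,q^4)=(-q^2,-q^4,q^6;q^6)_\infty=J_4J_6^2/(J_2J_{12})$. Your quotient $J_2J_3^2J_{12}/(J_1J_4J_6)$ expands as $1+q+\cdots$, whereas $f(q^2,q^4)=\sum_n q^{n(3n+1)}$ is even in $q$. Any coefficient comparison built on your quotient would therefore fail.

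Second, the modular setting is misstated. Each $J$-quotient is $q^{-1/3}$ times an eta quotient $F$, so $F(\tau+1)=e^{2\pi i/3}F(\tau)$, and no such $F$ lies in any $M_1(\Gamma_0(12),\chi)$. Also, dividing by $J_3^3/J_1$ would give weight $0$, not weight $1$. A workable version replaces $q$ by $q^3$. Then the three functions
$\eta(12\tau)^3\eta(18\tau)^2/(\eta(6\tau)^2\eta(36\tau))$, $\eta(6\tau)^7\eta(9\tau)\eta(36\tau)/(\eta(3\tau)^3\eta(12\tau)^3\eta(18\tau))$ and $\eta(9\tau)^3/\eta(3\tau)$
all lie in $M_1(\Gamma_0(36),\chi_{-3})$, once you verify that their orders at all cusps are nonnegative. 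The Sturm bound $72/12=6$ then reduces the identity to a finite coefficient check.

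Alternatively, use dissections. With the corrected product you get $\theta_1f(q^2,q^4)=2J_4^3J_6^2/(J_2^2J_{12})$ and $\theta_0f(q,q^5)=J_2^7J_3J_{12}/(J_1^3J_4^3J_6)$. The identity then follows from the known $2$-dissections
$J_3^3/J_1=J_4^3J_6^2/(J_2^2J_{12})+qJ_{12}^3/J_4$ and $J_3/J_1^3=J_4^6J_6^3/(J_2^9J_{12}^2)+3qJ_4^2J_6J_{12}^2/J_2^7$.
This is valid only if the proofs you use for these dissections do not themselves go through \eqref{cubic}.

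One of these arguments has to be actually carried out, or the result simply cited from \cite{BBG94} as the paper does.
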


\begin{proof}[Proof of \eqref{thm2}]
For the left side of \eqref{thm2}, we first sum over $i\geq 0$ to get 
\begin{align}
\text{Left side of \eqref{thm2}}&=\sum_{j,k,l \geq 0}
\frac{
  q^{ 2j^2 + 2k^2 + 2l^2 - 2jk - 2jl - 2j + 2k}
}{
  (q^2; q^2)_j (q^2; q^2)_k (q^2; q^2)_l
}\sum_{i\geq 0}\frac{q^{i^2+2i(k-j+1)}}{(q^2; q^2)_i}\nonumber\\
&=\sum_{j,k,l \geq 0}
\frac{
  q^{ 2j^2 + 2k^2 + 2l^2 - 2jk - 2jl - 2j + 2k}
}{
  (q^2; q^2)_j (q^2; q^2)_k (q^2; q^2)_l
}(-q^{2k-2j+3}; q^2)_{\infty}\quad\text{(by \eqref{II2})}\nonumber\\
&=H\sum_{j,k,l \geq 0}
\frac{
  q^{ 2j^2 + 2k^2 + 2l^2 - 2jk - 2jl - 2j + 2k}
}{
  (q^2; q^2)_j (q^2; q^2)_k (q^2; q^2)_l(-q; q^2)_{k-j+1}
}.\label{2_1}
\end{align}
Next we set $r=k-j$, then the exponent of $q$ in numerator of \eqref{2_1} is
\begin{align*}
2j^2 + 2k^2 + 2l^2 - 2jk - 2jl - 2j + 2k&=2j^2+2(r+j)^2+2l^2-2j(r+j)-2jl-2j+2(r+j)\\
&=2(j(j+r-l)+r^2+r+l^2).
\end{align*}
And applying Lemma \ref{lem:1} with $q\ri q^2$ to \eqref{2_1}
gives
\begin{align}
&\text{Left side of }\eqref{thm2}\nonumber\\=&H\sum_{r\in\bZ}\sum_{l\geq 0}\frac{q^{2r^2+2r+2l^2}}{(q^2; q^2)_l(-q; q^2)_{r+1}}\sum_{j\geq \max(0, -r)}\frac{q^{2j(j+r-l)}}{(q^2; q^2)_j(q^2; q^2)_{r+j}}\quad\text{(by $r=k-j$ in \eqref{2_1})}\nonumber\\
=&\frac{H}{J_2}\sum_{r\in \bZ}\sum_{l\geq 0}\frac{q^{2r^2+2r+2l^2}}{(q^2; q^2)_l(-q; q^2)_{r+1}}\sum_{b=0}^{l}q^{2b(r-l+b)}\begin{bmatrix}l\\b\end{bmatrix}_{q^2}\quad\text{(by Lemma \ref{lem:1})}\nonumber\\
=&\frac{H}{J_2}\sum_{r\in \bZ}\sum_{a, b\geq 0}\frac{q^{2r^2+2r+2(a+b)^2+2br-2b(a+b)+2b^2}}{(-q; q^2)_{r+1}(q^2; q^2)_a(q^2; q^2)_b}\quad\text{(by $l=a+b$)}\nonumber\\
=&\frac{H}{J_2}T(q),\quad\text{(by $r+1=c$)}\label{2_2}
\end{align}
where
\begin{align}
T(q):=\sum_{c\in \bZ}\sum_{a, b\geq 0}\frac{q^{2a^2+2ab+2b^2+2bc+2c^2-2b-2c}}{(q^2; q^2)_{a}(q^2; q^2)_b(-q; q^2)_{c}}.
\end{align}
For every $c\in \bZ$, by \eqref{II2} we have
\begin{align*}
\frac{H}{(-q; q^2)_c}=(-q^{2c+1}; q^2)_{\infty}=\sum_{d\geq 0}\frac{q^{d^2+2cd}}{(q^2; q^2)_d}.
\end{align*}
Therefore, we obtain 
\begin{align}
H\cdot T(q)=\sum_{c\in \bZ}\sum_{a, b, d\geq 0}\frac{q^{2a^2+2ab+2b^2-2b+d^2+2c^2+2c(b+d-1)}}{(q^2; q^2)_a(q^2; q^2)_b(q^2; q^2)_d}.\label{2_3}
\end{align}
Now we set $q^2=t^4$, that is, $t=q^{1/2}$. Then the exponent identity
\begin{align}
4a^2&+4ab+4b^2-4b+2d^2+4c^2+4c(b+d-1)\nonumber\\
&=4a^2+4ab+3b^2-2bd+d^2-2b+2d+(2c+b+d-1)^2-1\label{ex}
\end{align}
separates the bilateral theta sum from the nonnegative triple sum in \eqref{2_3}, that is,
\begin{align}
H\cdot T(t^2)=\sum_{a, b, d\geq 0}\frac{t^{4a^2+4ab+3b^2-2bd+d^2-2b+2d}}{(t^4; t^4)_a(t^4; t^4)_b(t^4; t^4)_d}\sum_{c\in \bZ}t^{(2c+b+d-1)^2-1}.\label{2_4}
\end{align}
We can define
\begin{align}
Z(t):=\sum_{a, b, d\geq 0}\frac{t^{4a^2+4ab+3b^2-2bd+d^2-2b+2d}}{(t^4; t^4)_a(t^4; t^4)_b(t^4; t^4)_d},
\end{align}
and let $Z_e(t)$ and $Z_o(t)$ be its portions with $b+d$ even and odd, respectively. Since
\begin{align*}
\sum_{n\in \bZ}t^{(2n)^2}=f(t^4, t^4),\quad \sum_{n\in \bZ}t^{(2n+1)^2}=tf(1, t^8),
\end{align*}
then \eqref{2_4} gives
\begin{align}
H\cdot T(t^2)=t^{-1}(f(t^4, t^4)Z_o(t)+tf(1, t^8)Z_e(t)).\label{thm2-2}
\end{align}
Moreover, using \eqref{SW} we have 
\begin{align}
Z(t)=\frac{(t^{12}; t^{12})_{\infty}}{(t^8; t^8)_{\infty}(t, t^{11}; t^{12})_{\infty}(t^5, t^7; t^{12})_{\infty}}.
\end{align}
By Jacobi triple product identity \eqref{jacobi}, this product is equivalently
\begin{align}
Z(t)=\frac{(-t^2; t^4)_{\infty}}{(t^4; t^4)_{\infty}}f(t, t^5).\label{thm2-1}
\end{align}
The factor $(-t^2; t^4)_{\infty}/(t^4; t^4)_{\infty}$ in \eqref{thm2-1} contains only even powers of $t$. The even-odd dissection
\begin{align}
f(t, t^5)=f(t^8, t^{16})+tf(t^4, t^{20})
\end{align}
therefore gives 
\begin{align*}
Z_e(t)=\frac{(-t^2; t^4)_{\infty}}{(t^4; t^4)_{\infty}}f(t^8, t^{16}),\quad Z_o(t)=t\frac{(-t^2; t^4)_{\infty}}{(t^4; t^4)_{\infty}}f(t^4, t^{20}).
\end{align*}
Substituting $Z_e(t)$ and $Z_o(t)$ into \eqref{thm2-2} yields
\begin{align}
H\cdot T(t^2)=\frac{(-t^2; t^4)_{\infty}}{(t^4; t^4)_{\infty}}(f(t^4, t^4)f(t^4, t^{20})+f(1, t^8)f(t^8, t^{16})).\label{thm2-3}
\end{align}
Now we claim that
\begin{align}
f(t^4, t^4)f(t^4, t^{20})+f(1, t^8)f(t^8, t^{16})=\sum_{m, n\in \bZ}t^{4m^2+4mn+4n^2+4m+4n}.\label{2_31}
\end{align}
Indeed, for the change of variables $u=m+n$ and $v=m-n$, the integers $u$, $v$ have the same parity and 
\begin{align*}
4(m^2+mn+n^2+m+n)=3u^2+v^2+4u.
\end{align*}
If $u=2r$ and $v=2s$, the exponent is $12r^2+8r+4s^2$; replacing $r$ by $-r$ gives the product $f(t^4, t^4)f(t^4, t^{20})$ by \eqref{theta} and \eqref{jacobi}. If $u=2r+1$ and $v=2s+1$, set $R=r+1$ and $S=-s$; the exponent becomes $12R^2-4R+4S(S-1)$, giving the product $f(1, t^8)f(t^8, t^{16})$ by \eqref{theta} and \eqref{jacobi}. 

Then the shifted cubic theta identity is
\begin{align}
\sum_{m, n\in \bZ}t^{4m^2+4mn+4n^2+4m+4n}=\sum_{m, n\in \bZ}q^{2m^2+2mn+2n^2+2m+2n}=3\frac{(q^6; q^6)_{\infty}^3}{(q^2; q^2)_{\infty}}=3\frac{J_6^3}{J_2},\label{thm2-4}
\end{align}
by letting $q\ri q^2$ in \eqref{cubic}. Since 
\begin{align*}
\frac{(-t^2; t^4)_{\infty}}{(t^4; t^4)_{\infty}}=\frac{(-q; q^2)_{\infty}}{(q^2; q^2)_{\infty}}=\frac{H}{J_2},
\end{align*}
then \eqref{thm2-3} and \eqref{thm2-4} imply
\begin{align*}
T(q)=3\frac{J_6^3}{J_2^2}.
\end{align*}
Finally, by \eqref{2_2} we have
\begin{align*}
\text{Left side of }\eqref{thm2}&=3H\frac{J_6^3}{J_2^3}=3\frac{J_2^2}{J_1J_4}\frac{J_6^3}{J_2^3}=3\frac{J_6^3}{J_1J_2J_4}.
\end{align*}
This completes the proof.
\end{proof}

\begin{proof}[Proof of \eqref{thm21}]
For the left side of \eqref{thm21}, we have
\begin{align}
&\text{Left side of \eqref{thm21}}\nonumber\\
=&\sum_{i, k\geq 0}\sum_{r\in \bZ}\frac{q^{\frac{N^2+N}{2}+\frac{k^2+k}{2}+r^2-r(N+1)}}{(q; q)_i(q; q)_k}\sum_{l\geq \max(0, -r)}\frac{q^{l(l+r-N)}}{(q; q)_{l}(q; q)_{r+l}}\quad\text{(by $r=j-l$ and $N=i+k$)}\nonumber\\
=&\frac{1}{J_1}\sum_{i, k\geq 0}\sum_{r\in \bZ}\frac{q^{\frac{N^2+N}{2}+\frac{k^2+k}{2}+r^2-r(N+1)}}{(q; q)_i(q; q)_k}\sum_{a=0}^{N}q^{a(r-N+a)}\begin{bmatrix}N\\a\end{bmatrix}_q\quad\text{(by Lemma \ref{lem:1})}\nonumber\\
=&\frac{1}{J_1}\sum_{i, k\geq 0}\frac{q^{\frac{N^2+N}{2}+\frac{k^2+k}{2}}}{(q; q)_i(q; q)_k}\sum_{h=0}^{N}q^{h^2-Nh}\begin{bmatrix}N\\h\end{bmatrix}_q\sum_{r\in \bZ}q^{r^2-(h+1)r}.\quad\text{(by $h=N-a$)}\label{4_1}
\end{align}
Next, summing over $i+k=N$ and applying \eqref{finite-q-bino}, we obtain
\begin{align}
&\text{Left side of \eqref{thm21}}\nonumber\\
=&\frac{1}{J_1}\sum_{N\geq 0}\sum_{k=0}^N\frac{q^{\frac{k^2+k}{2}}}{(q; q)_{N-k}(q; q)_k}q^{\frac{N^2+N}{2}}\sum_{h=0}^{N}q^{h^2-Nh}\begin{bmatrix}N\\h\end{bmatrix}_q\sum_{r\in \bZ}q^{r^2-(h+1)r}\quad\text{(by \eqref{4_1})}\nonumber\\
=&\frac{1}{J_1}\sum_{N\geq 0}\frac{(-q; q)_N q^{\frac{N^2+N}{2}}}{(q; q)_N}\sum_{h=0}^{N}q^{h^2-Nh}\begin{bmatrix}N\\h\end{bmatrix}_q\sum_{r\in \bZ}q^{r^2-(h+1)r}.\quad\text{(by \eqref{finite-q-bino})}\label{4_2}
\end{align}
We now split $h$ by parity. If $h=2s$, then
\begin{align}
\sum_{r\in \bZ}q^{r^2-(2s+1)r}=q^{-s(s+1)}\theta_1;
\end{align}
and if $h=2s+1$, then 
\begin{align}
\sum_{r\in \bZ}q^{r^2-(2s+2)r}=q^{-(s+1)^2}\theta_0,
\end{align}
where $\theta_0$ and $\theta_1$ are defined in \eqref{ja}.
In the first case we write $N=2s+n$ and in the second case we write $N=2s+1+n$. Then by \eqref{4_2} we have
\begin{align}
\text{Left side of \eqref{thm21}}=\frac{\theta_1}{J_1}\sum_{s, n\geq 0}\frac{(-q; q)_{2s+n}q^{\frac{n^2+n}{2}+s^2}}{(q; q)_{2s}(q; q)_n}+\frac{\theta_0}{J_1}\sum_{s, n\geq 0}\frac{(-q; q)_{2s+1+n}q^{\frac{n^2+n}{2}+s(s+1)}}{(q; q)_{2s+1}(q; q)_n},\label{4_3}
\end{align}
since the relevant exponent identities are
\begin{align*}
\frac{N^2+N}{2}+h^2-Nh-s(s+1)&=\frac{n^2+n}{2}+s^2,\\
\frac{N^2+N}{2}+h^2-Nh-(s+1)^2&=\frac{n^2+n}{2}+s(s+1).
\end{align*}
Moreover, summing over $n$ in \eqref{4_3} and applying Lemma \ref{lem:3}, we have
\begin{align}
\text{Left side of \eqref{thm21}}=&\frac{PR\theta_1}{J_1}\sum_{s\geq 0}\frac{q^{s^2}(-q; q^2)_s}{(q; q)_{2s}}+\frac{PH\theta_0}{J_1}\sum_{s\geq 0}\frac{q^{s(s+1)}(-q^2; q^2)_s}{(q; q)_{2s+1}}\nonumber\\
=&\frac{PR\theta_1}{J_1}\frac{f(q^2, q^4)}{f(-q, -q^3)}+\frac{PH\theta_0}{J_1}\frac{f(q, q^5)}{f(-q^2, -q^2)}\quad\text{(by Lemma \ref{lem:sla})}\nonumber\\
=&\frac{P}{J_1^2}(f(1, q^2)f(q^2, q^4)+f(q, q)f(q, q^5)).\quad\text{(by \eqref{jacobi})}\nonumber\\
=&3\frac{J_2J_3^3}{J_1^4}.\quad\text{(by \eqref{2_31} and \eqref{thm2-4})}
\end{align}
This completes the proof.
\end{proof}

\section{Concluding remarks}\label{sec:conclusion}

In this paper, we give proofs of five conjectural identities on modular rank four Nahm sums proposed by Cao and Wang~\cite{CW25}. However, there is another conjectural identity in \cite{CW25} that we are currently unable to resolve:
\begin{conj}[{cf.~\cite[Conjecture 3.3]{CW25}}]\label{conj:CW33}
We have
\begin{align}
\sum_{i,j,k,l \geq 0} 
\frac{
  q^{2i^2 + j^2 + 2k^2 + l^2 - 2ik - 2il - jl + 2i - j - 2k}
}{
  (q^2; q^2)_i (q^2; q^2)_j (q^2; q^2)_k (q^2; q^2)_l
}
= 9 \frac{J_3^3 J_6^2}{J_1 J_2^4}.\label{CW33}
\end{align}
\end{conj}
 In this section, we first give a decomposition of \eqref{CW33}, and then present a simplified version of \eqref{thm21}, which connects to a conjecture of Shi and Wang \cite{SW25}. Now we introduce a conjecture. 

\begin{conj}\label{conj:LI}
Define
\begin{align}
\hZ_e(q):&=\sum_{a, j, s\geq 0}\frac{q^{a^2-aj+j^2-j+2s^2+2s-2sj}}{(q^2; q^2)_a(q^2; q^2)_j(q^2; q^2)_{2s}},\\
\hZ_o(q):&=\sum_{a, j, s\geq 0}\frac{q^{a^2-aj+j^2-j+2s^2+4s+1-(2s+1)j}}{(q^2; q^2)_a(q^2; q^2)_j(q^2; q^2)_{2s+1}},
\end{align}
then we have
\begin{align}
\hZ_e(q)&=3\frac{J_3^3}{J_1J_2^2J_6}f(q^4, q^8),\\
\hZ_o(q)&=3\frac{J_3^3}{J_1J_2^2J_6}f(q^2, q^{10}).
\end{align}
\end{conj}
Next we will see that if Conjecture \ref{conj:LI} is established, then \eqref{CW33} follows immediately. 

\begin{proposition}
Conjecture \ref{conj:LI} implies \eqref{CW33}.
\end{proposition}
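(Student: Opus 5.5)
The plan is to run the same rank reduction used in the proof of \eqref{thm2}. It should express the left side of \eqref{CW33} as $\frac{1}{J_2}\bigl(f(1,q^4)\hZ_e(q)+f(q^2,q^2)\hZ_o(q)\bigr)$. Conjecture \ref{conj:LI} then reduces everything to the theta identity \eqref{2_31} combined with \eqref{thm2-4}.

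\emph{Durfee step.} Put $k=i+r$ with $r\in\bZ$. The part of the exponent involving $i$ and $k$ is
\begin{align*}
2i^2+2k^2-2ik-2il+2i-2k=2i(i+r-l)+2r^2-2r.
\end{align*}
So for fixed $r,j,l$ the $i$-sum is exactly the left side of Lemma \ref{lem:1} with $q\ri q^2$ and $c=l$. It equals $\frac{1}{J_2}\sum_{a=0}^{l}q^{2a(r-l+a)}\begin{bmatrix}l\\a\end{bmatrix}_{q^2}$. Writing $l=a+b$ and cancelling $(q^2;q^2)_l$ against the $q^2$-binomial coefficient, the left side of \eqref{CW33} becomes
\begin{align*}
\frac{1}{J_2}\sum_{r\in\bZ}\sum_{a,b,j\geq 0}\frac{q^{2r^2+2(a-1)r+a^2+b^2-aj-bj+j^2-j}}{(q^2;q^2)_a(q^2;q^2)_b(q^2;q^2)_j},
\end{align*}
using $2a(r-b)+(a+b)^2=2ar+a^2+b^2$.

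\emph{Parity split in $a$.} For $a=2s$, use $2r^2+(4s-2)r=2(r+s)^2-2(r+s)-2s^2+2s$. The $r$-sum is then $\sum_{r}q^{2r^2-2r}=f(1,q^4)$. The remaining exponent is $2s^2+2s-2sj+b^2-bj+j^2-j$, which is exactly the summand of $\hZ_e$ with $b$ in the role of $a$.

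For $a=2s+1$, use $2r^2+4sr=2(r+s)^2-2s^2$. The $r$-sum is $f(q^2,q^2)$. The remaining exponent is $2s^2+4s+1-(2s+1)j+b^2-bj+j^2-j$, which is the summand of $\hZ_o$.

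Hence
\begin{align*}
\text{Left side of \eqref{CW33}}=\frac{1}{J_2}\Bigl(f(1,q^4)\hZ_e(q)+f(q^2,q^2)\hZ_o(q)\Bigr).
\end{align*}

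\emph{Insert the conjecture.} Substituting Conjecture \ref{conj:LI} gives
\begin{align*}
\text{Left side of \eqref{CW33}}=\frac{3J_3^3}{J_1J_2^3J_6}\Bigl(f(1,q^4)f(q^4,q^8)+f(q^2,q^2)f(q^2,q^{10})\Bigr).
\end{align*}
The bracket is precisely the left side of \eqref{2_31} with $t^4=q^2$, that is, $t=q^{1/2}$. So it equals $\sum_{m,n}q^{2m^2+2mn+2n^2+2m+2n}$, which is $3J_6^3/J_2$ by \eqref{thm2-4}. Multiplying out gives $9J_3^3J_6^2/(J_1J_2^4)$, as claimed.

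\emph{Where care is needed.} There is no real obstacle; this is bookkeeping. The points to check are the following.
\begin{itemize}
\item Lemma \ref{lem:1} must be applied with the correct range $i\geq\max(0,-r)$. This range is automatic because $k=i+r\geq 0$.
\item The completed squares in the two parity cases must produce exactly the exponents defining $\hZ_e$ and $\hZ_o$, including the constant $+1$ in $\hZ_o$.
\item The pairing of thetas must be matched correctly: $f(1,q^4)$ goes with $f(q^4,q^8)$, and $f(q^2,q^2)$ goes with $f(q^2,q^{10})$. Only this matching reproduces \eqref{2_31}.
\end{itemize}
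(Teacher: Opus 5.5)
Your proposal is correct and follows essentially the paper's proof: Lemma \ref{lem:1} with $q\to q^2$ and $c=l=a+b$ gives the same triple sum, a parity split yields $\frac{1}{J_2}\bigl(\theta_1(q^2)\hZ_e(q)+\theta_0(q^2)\hZ_o(q)\bigr)$, and \eqref{2_31} with \eqref{thm2-4} finishes the argument. The only difference is cosmetic: you substitute $k=i+r$ and split $a$ by parity, whereas the paper substitutes $i=k+r$ and splits $b$; the two computations coincide under $r\to -r$, $a\leftrightarrow b$.
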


\begin{proof}
For left side of \eqref{CW33}, setting $i=k+r$ and applying Lemma \ref{lem:2} with $q\ri q^2$ and $c=l=a+b$, then after simplification we have
\begin{align}
\text{Left side of \eqref{CW33}}=\frac{1}{J_2}\sum_{r\in \bZ}\sum_{a, b, j\geq 0}\frac{q^{2r^2+2r-2br+a^2-aj+b^2-bj+j^2-j}}{(q^2; q^2)_a(q^2; q^2)_b(q^2; q^2)_j}.\label{6_1}
\end{align}
We split $b$ by parity. For $b=2s$ we have
\begin{align}
\sum_{r\in \bZ}q^{2r^2+2r-4sr}=q^{-2s(s-1)}\theta_1(q^2),\label{6_2}
\end{align}
and for $b=2s+1$ we have
\begin{align}
\sum_{r\in \bZ}q^{2r^2+2r-2(2s+1)r}=q^{-2s^2}\theta_0(q^2).\label{6_3}
\end{align}
Then \eqref{6_1}, \eqref{6_2} and \eqref{6_3} give the decomposition
\begin{align}
\text{Left side of \eqref{CW33}}&=\frac{\theta_1(q^2)}{J_2}\hZ_e(q)+\frac{\theta_0(q^2)}{J_2}\hZ_o(q)\nonumber\\
&=3\frac{J_3^3}{J_1J_2^3J_6}(f(1, q^4)f(q^4, q^8)+f(q^2, q^2)f(q^2, q^{10}))\nonumber\\
&=9\frac{J_3^3J_6^2}{J_1J_2^4}.\quad\text{(by \eqref{2_31} and \eqref{thm2-4})}
\end{align}
This completes the proof.
\end{proof}

Now we establish a connection between \eqref{thm21} and the conjecture stated in \cite{SW25} as follows.

\begin{conj}[{cf.~\cite[Conjecture 1.4]{SW25}}]\label{conj:SW14}
We have
\begin{align}\label{SW14}
\sum_{a, b, d \geq 0} 
\frac{
  t^{4a^2 + 4ab + 3b^2 - 2bd + d^2 + 4a + 2b}
}{
  (t^4; t^4)_{a}(t^4; t^4)_{b}(t^4; t^4)_{d}
}
= 
\frac{
  (t^6; t^6)_\infty (t^8; t^8)_\infty (t^2, t^{10}; t^{12})_\infty
}{
  (t^4; t^4)_\infty^2 (t, t^{11}; t^{12})_\infty (t^5, t^7; t^{12})_\infty
}.
\end{align}
\end{conj}

We simplify the left side of \eqref{thm21} by an argument analogous to the proof of \eqref{thm2}. We perform the following steps on the left side of \eqref{thm21} in order: (1) sum over $i\geq 0$ by \eqref{II2}; (2) put $c=k-j$; (3) apply Lemma \ref{lem:1}; (4) set $l=a+b$. Then we obtain 
\begin{align}
\text{Left side of \eqref{thm21}}=\frac{P}{J_1}\tT(q),\label{3_3}
\end{align}
where
\begin{align}
\tT(q):=\sum_{c\in \bZ}\sum_{a, b\geq 0}\frac{q^{a^2+ab+b^2+a+b+bc+c^2+c}}{(q; q)_a(q; q)_b(-q; q)_c}.
\end{align}
Then by \eqref{II2} we have
\begin{align*}
P\cdot \tT(q)=\sum_{c\in \bZ}\sum_{a, b, d\geq 0}\frac{q^{a^2+ab+b^2+a+b+d^2/2+d/2+c^2+bc+dc+c}}{(q; q)_a(q; q)_b(q; q)_d}.
\end{align*}
Moreover, by setting $q=t^4$ we have
\begin{align}
P\cdot \tT(t^4)=\sum_{a, b, d\geq 0}\frac{t^{4a^2+4ab+3b^2-2bd+d^2+4a+2b}}{(t^4; t^4)_a(t^4; t^4)_b(t^4; t^4)_d}\sum_{c\in \bZ}t^{(2c+b+d+1)^2-1},
\end{align}
since the exponent identity
\begin{align*}
4a^2&+4ab+4b^2+4a+4b+2d^2+2d+4c^2+4c(b+d+1)\\
&=4a^2+4ab+3b^2-2bd+d^2+4a+2b+(2c+b+d+1)^2-1.
\end{align*}
This implies that
\begin{align}
P\cdot \tT(t^4)=t^{-1}(f(t^4, t^4)\tZ_o(t)+tf(1, t^8)\tZ_e(t)),\label{3_1}
\end{align}
where $\tZ_e(t)$ and $\tZ_o(t)$ are the even and odd $b+d$ parts of 
\begin{align}
\tZ(t):=\sum_{a, b, d\geq 0}\frac{t^{4a^2+4ab+3b^2-2bd+d^2+4a+2b}}{(t^4; t^4)_a(t^4; t^4)_b(t^4; t^4)_d}.
\end{align}
At this point, if we assume that Conjecture \ref{conj:SW14} holds, then we have
\begin{align}
\tZ(t)=\frac{
  (t^6; t^6)_\infty (t^8; t^8)_\infty (t^2, t^{10}; t^{12})_\infty
}{
  (t^4; t^4)_\infty^2 (t, t^{11}; t^{12})_\infty (t^5, t^7; t^{12})_\infty
}=\frac{J_2}{J_1^2}(f(t^8, t^{16})+tf(t^4, t^{20})).\label{3_2}
\end{align}
Substituting \eqref{3_2} into \eqref{3_1} gives
\begin{align}
P\cdot \tT(t^4)=\frac{J_2}{J_1^2}(f(t^4, t^4)f(t^4, t^{20})+f(1, t^8)f(t^8, t^{16}))=3\frac{J_2J_3^3}{J_1^3}.
\end{align}
Hence, \eqref{3_3} implies that
\begin{align}
\text{Left side of \eqref{thm21}}=3\frac{J_2J_3^3}{J_1^4}.
\end{align}

The above derivation shows that \eqref{thm21} would follow immediately if we could prove Conjecture \ref{conj:SW14}. This raises a very interesting question, namely, to investigate the relationship between these two types of identities, especially since we already have fully proved examples.

On the other hand, it is well known that the Rogers-Ramanujan identities \eqref{id:RR} are closely related to integer partitions. For the definition and details of integer partitions, we refer the reader to \cite{andtp}. Therefore, another natural and interesting question is how these identities can be interpreted from the perspective of integer partitions, whether they admit combinatorial proofs, or whether they point to some underlying connections.




\begin{thebibliography}{99}

\bibitem{andtp} G.E.~Andrews, {\it The theory of partitions}, Addison-Wesley, Reading, MA, 1976; reissued: Cambridge University Press, Cambridge, 1998.

\bibitem{ber06} B.C.~Berndt, {\it Number theory in the spirit of Ramanujan}, American Mathematical Society, Providence, RI, 2006.

\bibitem{BBG94} J.M.~Borwein, P.B.~Borwein and F.G.~Garvan, Some cubic modular identities of Ramanujan, Trans. Amer. Math. Soc. {\bf 343} (1994), 35--47.

\bibitem{CMP16} C.~Calinescu, A.~Milas and M.~Penn, Vertex algebraic structure of principal subspaces of basic $A_{2n}^{(2)}$-modules, J. Pure Appl. Algebra {\bf 220} (2016), 1752--1784.

\bibitem{CRW24} Z.~Cao, H.~Rosengren and L.~Wang, On some double Nahm sums of Zagier, J. Combin. Theory Ser. A {\bf 202} (2024), Paper No. 105819, 13 pp.

\bibitem{CW24} Z.~Cao and L.~Wang, {\it Some new modular rank three Nahm sums from a lift-dual operation}, \href{https://arxiv.org/abs/2412.15767v1}{arXiv:2412.15767v1}.

\bibitem{CW25} Z.~Cao and L.~Wang, {\it Some new modular rank four Nahm sums as lift-dual of rank three examples}, \href{https://arxiv.org/abs/2508.12468v1}{arXiv:2508.12468v1}.

\bibitem{CF13} I.~Cherednik and B.~Feigin, Rogers-Ramanujan type identities and Nil-DAHA, Adv. Math. {\bf 248} (2013), 1050--1088.

\bibitem{GR90} G.~Gasper and M.~Rahman, {\it Basic hypergeometric series}, Encyclopedia of Mathematics and its applications 35, Cambridge University Press, Cambridge, 1990.

\bibitem{LSZ08} J.~G. Mc~Laughlin, A.~V. Sills and P.~Zimmer, Rogers-Ramanujan-Slater type identities, Electron. J. Combin. {\bf DS15} (2008), Dynamic Surveys, 59 pp.

\bibitem{nah941} W.~Nahm, Conformal field theory and the dilogarithm, in: 11th International Conference on Mathematical Physics (ICMP-11) (Satelite Colloquia: New Problems in General Theory of Fields and Particles), Paris, 1994, pp. 662--667.

\bibitem{nah942} W.~Nahm, Conformal field theory, dilogarithm and three dimensional manifold, in: W. Nahm, J.-M. Shen (Eds.), Interface Between Physics and Mathematics, Proceedings, Conference in Hangzhou, People's Republic of China, September 1993, World Scientific, Singapore, 1994, pp. 154--165.

\bibitem{nah07} W. Nahm, Conformal field theory and torsion elements of the Bloch group, in: Frontiers in Number Theory, Physics and Geometry, II, Springer, 2007, pp. 67--132.

\bibitem{ram14} S.~Ramanujan, Problem 584, J. Indian Math. Soc. {\bf 6} (1914), 199--200.

\bibitem{ram19}S.~Ramanujan, Proof of certain identities in combinatory analysis, Proc. Camb, Philos. Soc. {\bf 19} (1919), 214--216.

\bibitem{rog94} L.J.~Rogers, Third memoir on the expansion of certain infinite products, Proc. Lond. Math. Soc. {\bf 26} (1894), 15--32.

\bibitem{sch17} I.~Schur, Ein Beitrag zur additiven Zahlentheorie und zur Theorie der Kettenbr\"{u}che, Sitzungsber. Preuss. Akad. Wiss. Phys.-Math. Klasse (1917), 302--321.

\bibitem{SW25} C.~Shi and L.~Wang, Modularity of tadpole Nahm sums in ranks 4 and 5, J. Number Theory {\bf 284} (2026), 214--245.

\bibitem{sil18} A.V.~Sills, {\it An invitation to the Rogers-Ramanujan identities}, CRC Press, Boca Raton (2018).

\bibitem{VZ11} M.~Vlasenko and S.~Zwegers, Nahm's conjecture: asymptotic computations and counterexamples, Commun. Number Theory Phys. {\bf 5}(3) (2011), 542--617.

\bibitem{wan241} L.~Wang, Identities on Zagier's rank two examples for Nahm's conjecture, Res. Math. Sci. {\bf 11}(3) (2024), Paper No. 49, 34 pp.

\bibitem{wan242} L.~Wang, Explicit forms and proofs of Zagier's rank three examples for Nahm's problem, Adv. Math. {\bf 450} (2024), Paper No. 109743, 46 pp.

\bibitem{zag07} D.~Zagier, The dilogarithm function, in: Frontiers in Number Theory, Physics and Geometry, II, Springer-Verlag, 2007, pp. 3--65.

\end{thebibliography}
\end{document}